\documentclass[12pt]{amsart}
\usepackage[margin=1in]{geometry}
\usepackage{amsmath,amsfonts,amsthm,amssymb,bbm}
\usepackage{graphicx,color,dsfont}
\usepackage{enumitem}

\newtheorem{theorem}{Theorem}
\newtheorem{lemma}{Lemma}
\newtheorem{proposition}{Proposition}

\newtheorem{corollary}{Corollary}

\newtheorem{claim}{Claim}

 \theoremstyle{definition}
 
 \theoremstyle{remark}

 \numberwithin{equation}{section}

\newcommand{\vertiii}[1]{{\left\vert\kern-0.25ex\left\vert\kern-0.25ex\left\vert #1
    \right\vert\kern-0.25ex\right\vert\kern-0.25ex\right\vert}}

\newcommand{\f}[2]{\frac{#1}{#2}}

\newcommand{\cl}{{\mathcal L}}

\begin{document}

\newcommand{\abs}[1]{\lvert#1\rvert}

\newcommand{\al}{\alpha}
\newcommand{\be}{\beta}
\newcommand{\wh}[1]{\widehat{#1}}
\newcommand{\ga}{\gamma}
\newcommand{\Ga}{\Gamma}
\newcommand{\de}{\delta}
\newcommand{\ben}{\beta_n}
\newcommand{\De}{\Delta}
\newcommand{\ve}{\varepsilon}
\newcommand{\ze}{\zeta}
\newcommand{\Th}{\Theta}
\newcommand{\ka}{\kappa}
\newcommand{\la}{\lambda}
\newcommand{\laj}{\lambda_j}
\newcommand{\lak}{\lambda_k}
\newcommand{\La}{\Lambda}
\newcommand{\si}{\sigma}
\newcommand{\Si}{\Sigma}
\newcommand{\vp}{\varphi}
\newcommand{\om}{\omega}
\newcommand{\Om}{\Omega}

\newcommand{\ro}{{\mathbf R}}
\newcommand{\rn}{{\mathbf R}^n}
\newcommand{\rd}{{\mathbf R}^d}
\newcommand{\rmm}{{\mathbf R}^m}
\newcommand{\rone}{\mathbf R}
\newcommand{\rtwo}{\mathbf R^2}
\newcommand{\rthree}{\mathbf R^3}
\newcommand{\rfour}{\mathbf R^4}
\newcommand{\ronen}{{\mathbf R}^{n+1}}
\newcommand{\ku}{\mathbf u}
\newcommand{\kw}{\mathbf w}
\newcommand{\kf}{\mathbf f}
\newcommand{\kz}{\mathbf z}

\newcommand{\N}{\mathbf N}

\newcommand{\tn}{\mathbf T^n}
\newcommand{\tone}{\mathbf T^1}
\newcommand{\ttwo}{\mathbf T^2}
\newcommand{\tthree}{\mathbf T^3}
\newcommand{\tfour}{\mathbf T^4}

\newcommand{\zn}{\mathbf Z^n}
\newcommand{\zp}{\mathbf Z^+}
\newcommand{\zone}{\mathbf Z^1}
\newcommand{\zz}{\mathbf Z}
\newcommand{\ztwo}{\mathbf Z^2}
\newcommand{\zthree}{\mathbf Z^3}
\newcommand{\zfour}{\mathbf Z^4}

\newcommand{\hn}{\mathbf H^n}
\newcommand{\hone}{\mathbf H^1}
\newcommand{\htwo}{\mathbf H^2}
\newcommand{\hthree}{\mathbf H^3}
\newcommand{\hfour}{\mathbf H^4}

\newcommand{\cone}{\mathbf C^1}
\newcommand{\ctwo}{\mathbf C^2}
\newcommand{\cthree}{\mathbf C^3}
\newcommand{\cfour}{\mathbf C^4}
\newcommand{\dpr}[2]{\langle #1,#2 \rangle}

\newcommand{\sn}{\mathbf S^{n-1}}
\newcommand{\sone}{\mathbf S^1}
\newcommand{\stwo}{\mathbf S^2}
\newcommand{\sthree}{\mathbf S^3}
\newcommand{\sfour}{\mathbf S^4}

\newcommand{\lp}{L^{p}}
\newcommand{\lppr}{L^{p'}}
\newcommand{\lqq}{L^{q}}
\newcommand{\lr}{L^{r}}
\newcommand{\echi}{(1-\chi(x/M))}
\newcommand{\chip}{\chi'(x/M)}

\newcommand{\wlp}{L^{p,\infty}}
\newcommand{\wlq}{L^{q,\infty}}
\newcommand{\wlr}{L^{r,\infty}}
\newcommand{\wlo}{L^{1,\infty}}

\newcommand{\lprn}{L^{p}(\rn)}
\newcommand{\lptn}{L^{p}(\tn)}
\newcommand{\lpzn}{L^{p}(\zn)}
\newcommand{\lpcn}{L^{p}(\cn)}
\newcommand{\lphn}{L^{p}(\cn)}

\newcommand{\lprone}{L^{p}(\rone)}
\newcommand{\lptone}{L^{p}(\tone)}
\newcommand{\lpzone}{L^{p}(\zone)}
\newcommand{\lpcone}{L^{p}(\cone)}
\newcommand{\lphone}{L^{p}(\hone)}

\newcommand{\lqrn}{L^{q}(\rn)}
\newcommand{\lqtn}{L^{q}(\tn)}
\newcommand{\lqzn}{L^{q}(\zn)}
\newcommand{\lqcn}{L^{q}(\cn)}
\newcommand{\lqhn}{L^{q}(\hn)}

\newcommand{\lo}{L^{1}}
\newcommand{\lt}{L^{2}}
\newcommand{\li}{L^{\infty}}

\newcommand{\co}{C^{1}}
\newcommand{\ci}{C^{\infty}}
\newcommand{\coi}{C_0^{\infty}}

\newcommand{\ca}{\mathcal A}
\newcommand{\cs}{\mathcal S}
\newcommand{\cm}{\mathcal M}
\newcommand{\cf}{\mathcal F}
\newcommand{\cb}{\mathcal B}
\newcommand{\ce}{\mathcal E}
\newcommand{\cd}{\mathcal D}
\newcommand{\cn}{\mathcal N}
\newcommand{\cz}{\mathcal Z}
\newcommand{\crr}{\mathbf R}
\newcommand{\cc}{\mathcal C}
\newcommand{\ch}{\mathcal H}
\newcommand{\cq}{\mathcal Q}
\newcommand{\cp}{\mathcal P}
\newcommand{\cx}{\mathcal X}

\newcommand{\pv}{\textup{p.v.}\,}
\newcommand{\loc}{\textup{loc}}
\newcommand{\intl}{\int\limits}
\newcommand{\iintl}{\iint\limits}
\newcommand{\dint}{\displaystyle\int}
\newcommand{\diint}{\displaystyle\iint}
\newcommand{\dintl}{\displaystyle\intl}
\newcommand{\diintl}{\displaystyle\iintl}
\newcommand{\liml}{\lim\limits}
\newcommand{\suml}{\sum\limits}
\newcommand{\ltwo}{L^{2}}
\newcommand{\supl}{\sup\limits}
\newcommand{\df}{\displaystyle\frac}
\newcommand{\p}{\partial}
\newcommand{\Ar}{\textup{Arg}}
\newcommand{\abssigk}{\widehat{|\si_k|}}
\newcommand{\ed}{(1-\p_x^2)^{-1}}
\newcommand{\tT}{\tilde{T}}
\newcommand{\tV}{\tilde{V}}
\newcommand{\wt}{\widetilde}
\newcommand{\Qvi}{Q_{\nu,i}}
\newcommand{\sjv}{a_{j,\nu}}
\newcommand{\sj}{a_j}
\newcommand{\pvs}{P_\nu^s}
\newcommand{\pva}{P_1^s}
\newcommand{\cjk}{c_{j,k}^{m,s}}
\newcommand{\Bjsnu}{B_{j-s,\nu}}
\newcommand{\Bjs}{B_{j-s}}
\newcommand{\Ly}{L_i^y}
\newcommand{\dd}[1]{\f{\partial}{\partial #1}}
\newcommand{\czz}{Calder\'on-Zygmund}
\newcommand{\chh}{\mathcal H}

\newcommand{\lbl}{\label}
\newcommand{\beq}{\begin{equation}}
\newcommand{\eeq}{\end{equation}}
\newcommand{\beqna}{\begin{eqnarray*}}
\newcommand{\eeqna}{\end{eqnarray*}}
\newcommand{\beqn}{\begin{equation*}}
\newcommand{\eeqn}{\end{equation*}}
\newcommand{\bp}{\begin{proof}}
\newcommand{\ep}{\end{proof}}
\newcommand{\bprop}{\begin{proposition}}
\newcommand{\eprop}{\end{proposition}}
\newcommand{\bt}{\begin{theorem}}
\newcommand{\et}{\end{theorem}}
\newcommand{\bex}{\begin{Example}}
\newcommand{\eex}{\end{Example}}
\newcommand{\bc}{\begin{corollary}}
\newcommand{\ec}{\end{corollary}}
\newcommand{\bcl}{\begin{claim}}
\newcommand{\ecl}{\end{claim}}
\newcommand{\bl}{\begin{lemma}}
\newcommand{\el}{\end{lemma}}
\newcommand{\dea}{(-\De)^\be}
\newcommand{\naa}{|\nabla|^\be}
\newcommand{\cj}{{\mathcal J}}

\title[ Ground states for the Hartree problem ]
 {On the classification of the spectrally stable  standing waves of  the Hartree problem}
\author[V. Georgiev]{Vladimir Georgiev}

\address{%
Department of Mathematics \\
University of Pisa \\
 Largo Bruno Pontecorvo 5
 I - 56127 Pisa
 \\ Italy \\ and \\
 Faculty of Science and Engineering \\ Waseda University \\
 3-4-1, Ohkubo, Shinjuku-ku, Tokyo 169-8555 \\
Japan}

\email{georgiev@dm.unipi.it}

\thanks{ Georgiev is supported in part by  INDAM, GNAMPA - Gruppo Nazionale per l'Analisi Matematica, la Probabilita e le loro Applicazioni, by Institute of Mathematics and Informatics, Bulgarian Academy of Sciences and Top Global University Project, Waseda University.  Stefanov    is partially  supported by  NSF-DMS, Applied Mathematics program under grants \# 1313107 and \# 1614734.}

\author[A. Stefanov]{\sc Atanas Stefanov}
\address{ Department of Mathematics,
University of Kansas,
1460 Jayhawk Boulevard,  Lawrence KS 66045--7523, USA}
\email{stefanov@ku.edu}

\subjclass{Primary 35Q55, 35P10; Secondary 42B37, 42B35}

\keywords{semilinear, ground states, Schr\"odinger equation, Klein-Gordon equation}

\date{}

\begin{abstract}

We consider the fractional Hartree model, with general power non-linearity and space dimension. We construct variationally the ``normalized''
solutions for the corresponding Choquard-Pekar model - in particular a number of key properties, like smoothness and bell-shapedness are established.
As a consequence of the  construction, we show that these solitons are spectrally stable as solutions to the time-dependent Hartree model.

In addition, we  analyze the spectral stability of the Moroz-Van Schaftingen solitons
of the classical Hartree problem, in any dimensions and power non-linearity.
A full classification is obtained, the main conclusion of which is   that only and exactly the ``normalized'' solutions
(which exist only in a portion of the range) are spectrally stable.

\end{abstract}

\maketitle
\section{ Introduction}
We consider the Cauchy problem for the Hartree equation
\begin{equation}
 \label{1010}
  \left\{
  \begin{array}{l}
  i u_t  +(-\De)^\be u - v |u|^{p-2} u=0, \ \ (t,x)\in \rone_+\times \rd \\
  (-\De)^{\al/2} v = |u|^p, \\
  u(0,x)=u_0(x).
  \end{array}
  \right.
\end{equation}
Here, the operator $(-\De)^\be$ is defined via Fourier multiplier with $|2\pi \xi|^{2\be}$, see the relevant definitions\footnote{Sometimes we shall use the notation
$|\nabla|=\sqrt{-\De}$. }  below in Section \ref{sec:prelim}. Unless otherwise indicated in a particular place, the values of the parameters will be henceforth  as follows: $ \be \in (0,1], d\geq 1, p>1, \al \in (0,d)$.
 Resolve the elliptic equation
$$
v=(-\De)^{-\al/2}[|u|^p]=I_\al[|u|^p]=I_{\al}(\cdot)*|u|^p, I_\al(x)= \f{\Ga(\f{d-\al}{2})}{\Ga(\f{\al}{2})\pi^{d/2} 2^{\al}|x|^{d-\al}}.
$$
We obtain the system
 \begin{equation}
 \label{10}
  \left\{
  \begin{array}{l}
  i u_t  +(-\De)^\be u -  c_{d, \ga} [|\cdot|^{-\ga}*|u|^p] |u|^{p-2} u=0, \ \ (t,x)\in \rone_+\times \rd \\
  u(0,x)=u_0(x),
  \end{array}
  \right.
\end{equation}
where we have introduced the parameter $\ga:=d-\al\in (0,d)$.

 We will be interested in the properties of standing wave solutions $u(t,x)=e^{i \om t} \phi(x)$,  with $\phi>0$. Clearly,  $\phi=\phi_{p,\om}$   will then  satisfy  the profile equation
\begin{equation}
\label{20}
(-\De)^\be \phi - c_{d,\ga}(|\cdot|^{-\ga}*|\phi|^{p}) |\phi|^{p-2}\phi = \om \phi, \ \ x\in \rd.
\end{equation}
The equation \eqref{20} is (a fractional) version of the well-known Choquard equation. This is  a good point for us to review some of the developments in the classical theory for this model.
\subsection{The classical Hartree-Choquard-Pekar model}
 As one expects, most of the work was done in the classical context, $\be=1$, for the Hartree-Choquard-Pekar system (for $\al\in (0,d)$)
\begin{equation}
\label{t:10}
\begin{array}{l}
i u_t -\De u -  I_\al[|u|^p] |u|^{p-2} u=0, (t,x)\in \rone\times \rd.
\end{array}
\end{equation}
The standing wave solutions of the form $e^{-i t} \vp$ satisfy
\begin{equation}
\label{t:20}
-\De \vp+\vp - I_\al[|\vp|^p] |\vp|^{p-2} \vp=0.
\end{equation}
The question for existence  of localized solutions for \eqref{t:20} has been well-studied in the last thirty years or so, mostly for special values of the parameters. For example, the case of $p=2$ has been studied in \cite{Lieb, PLLions, menzala} by the variational approach, and in \cite{TM} by ODE techniques.
 The case $\gamma=d-2 \geq 1$ and $2 \leq p < (2d-\gamma)/(d-2), $ was previously considered in \cite{GV} by introducing the constraint minimizer similar  to the one of Section 3 below.

Quite recently, a general classification result for such solutions was put forward in \cite{MZ} and a complete proof was presented in \cite{MVS1}.  The following theorem is a summary of the results presented in Theorems 1, 2, 3 in \cite{MVS1}.
\begin{theorem}
\label{vs:10}
Let $d\geq 1, \al \in (0,d)$ and $p\in (1, \infty)$.

Assuming $\f{d-2}{2d-\ga}<\f{1}{p}<\f{d}{2d-\ga}$, there is a solution $\vp\in H^2(\rd)\cap L^\infty(\rd)$ of \eqref{t:20}. Moreover, these  solutions are found in the form $\vp=t_0 \Phi$, where
$\Phi$ is a minimizer of the following optimization problem
\begin{equation}
\label{200}
\inf\limits_{u\neq 0}\f{\int_{\rd} [|\nabla u(x)|^2+|u(x)|^2] dx }{\dpr{I_\al[|u|^p]}{|u|^p}^{1/p}}
\end{equation}
and the scalar $t_0$ is selected so that $\int_{\rd} [|\nabla \vp(x)|^2+|\vp(x)|^2] dx=\dpr{I_\al[\vp^p]}{\vp^p}$.

In addition, there exists $x_0\in \rd$ and a decreasing function $\rho:\rone_+\to \rone_+$,  so that $\vp(x)=\pm \rho(|x-x_0|)$. Finally, $\vp$ satisfies the Pohozaev's identity
\begin{equation}
\label{pohoz}
(d-2) \int_{\rd} |\nabla \vp|^2+d \int_{\rd} |\vp|^2 =\f{2d-\ga}{p} \dpr{I_\al[\vp^p]}{\vp^p}.
\end{equation}

In  the complementary range: $\f{1}{p}\leq \f{d-2}{2d-\ga}$ or $\f{1}{p}\geq \f{d}{2d-\ga}$, the only regular and localized solution of \eqref{t:20} is $u=0$.
\end{theorem}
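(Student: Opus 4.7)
The plan is to construct $\vp$ as a minimizer of the scale-invariant Rayleigh-type quotient in \eqref{200}. I first establish coercivity: the Hardy-Littlewood-Sobolev inequality gives $\dpr{I_\al[|u|^p]}{|u|^p} \leq C\|u\|_{L^r}^{2p}$ with $r = \f{2dp}{2d-\ga}$, and the hypothesis $\f{d-2}{2d-\ga} < \f{1}{p} < \f{d}{2d-\ga}$ is exactly equivalent to $r \in (2, \f{2d}{d-2})$, in which range the Sobolev embedding yields $\|u\|_{L^r} \leq C\|u\|_{H^1}$. Hence the quotient is bounded below by a positive constant and the infimum is positive and finite.

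The main obstacle is compactness of a minimizing sequence $\{u_n\}$, since the problem is invariant under both dilations and translations. My approach is to symmetrize: by the P\'olya-Szeg\H{o} inequality the numerator $\int[|\nabla u_n|^2+|u_n|^2]$ does not grow under symmetric decreasing rearrangement $u_n\mapsto u_n^*$, while Riesz's rearrangement inequality applied with the radial kernel $I_\al$ shows that the nonlocal functional $\dpr{I_\al[|u|^p]}{|u|^p}$ does not decrease. So I may assume $u_n$ is nonnegative, radial, and radially decreasing. For $d\geq 2$, the Strauss compact embedding $H^1_{rad}(\rd) \hookrightarrow L^r(\rd)$ for $r\in(2,\f{2d}{d-2})$ produces a strong $L^r$ limit $\Phi$; combined with weak lower semicontinuity of the $H^1$ norm, $\Phi$ achieves the infimum, and its non-triviality follows from the pointwise decay of radial $H^1$ functions (the denominator is strictly positive along the sequence). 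The case $d=1$ is treated by a direct tightness argument exploiting monotonicity. This construction also yields the bell-shaped structure; strict radial monotonicity of the profile is then deduced from the moving-plane method adapted to the convolution nonlinearity.

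The Euler-Lagrange equation for $\Phi$ reads $-\De\Phi + \Phi = \La I_\al[\Phi^p]\Phi^{p-1}$ with Lagrange multiplier $\La>0$; a rescaling $\vp := \La^{1/(2p-2)}\Phi$ absorbs $\La$ and yields \eqref{t:20}. Testing \eqref{t:20} against $\vp$ then gives the normalization $\int[|\nabla\vp|^2+|\vp|^2]=\dpr{I_\al[\vp^p]}{\vp^p}$. Regularity $\vp\in H^2\cap L^\infty$ follows by an elliptic bootstrap: HLS upgrades $I_\al[\vp^p]$ to successively better $L^q$ spaces, and feeding this into $(-\De+1)\vp = I_\al[\vp^p]\vp^{p-1}$ through the Sobolev embedding eventually produces $L^\infty$ control, after which $\vp \in H^2$ by standard linear elliptic theory. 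The Pohozaev identity \eqref{pohoz} is obtained by pairing \eqref{t:20} with $x\cdot\nabla\vp$, integrating by parts, and using the homogeneity $I_\al(\la x)=\la^{-\ga}I_\al(x)$ to evaluate the convolution term.

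Finally, for non-existence in the complementary range $\f{1}{p}\leq \f{d-2}{2d-\ga}$ or $\f{1}{p}\geq \f{d}{2d-\ga}$, I couple \eqref{pohoz} with the energy identity obtained by testing \eqref{t:20} against $\vp$:
\begin{align*}
\int|\nabla\vp|^2 + \int|\vp|^2 &= \dpr{I_\al[\vp^p]}{\vp^p}, \\
(d-2)\int|\nabla\vp|^2 + d\int|\vp|^2 &= \f{2d-\ga}{p}\dpr{I_\al[\vp^p]}{\vp^p}.
\end{align*}
Eliminating $\dpr{I_\al[\vp^p]}{\vp^p}$ between these two relations gives
\begin{equation*}
\bigl[p(d-2)-(2d-\ga)\bigr]\int|\nabla\vp|^2 = \bigl[(2d-\ga)-pd\bigr]\int|\vp|^2,
\end{equation*}
and in either forbidden regime the two bracketed coefficients are of incompatible signs (one non-negative, the other strictly negative, or vice versa), forcing both integrals to vanish and hence $\vp\equiv 0$.
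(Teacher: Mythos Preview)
The paper does not supply its own proof of this theorem: it is stated explicitly as a summary of Theorems~1--3 of \cite{MVS1}, and is quoted purely as background for the subsequent stability analysis. There is therefore no in-paper argument to compare against. Your sketch is broadly the strategy of \cite{MVS1} itself --- HLS plus Sobolev for well-posedness of the quotient, rearrangement to restore compactness, Euler--Lagrange plus rescaling to produce \eqref{t:20}, bootstrap for regularity, and the Pohozaev/energy pair for nonexistence --- so in spirit it matches the cited source rather than anything the present authors do.

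One small correction: the quotient \eqref{200} is \emph{not} invariant under spatial dilations $u(x)\mapsto u(\lambda x)$, because the numerator $\int|\nabla u|^2+\int|u|^2$ mixes two different homogeneities. It is only invariant under translations and under scalar multiplication $u\mapsto tu$. This actually simplifies the compactness discussion --- dilations cannot cause loss of mass --- so the only obstruction is translation, which your symmetrization handles. Your reasoning is otherwise sound, but you should drop the dilation claim. Also note that the paper later (Lemma~\ref{le:76}) re-derives the Euler--Lagrange relation \eqref{310} and the rescaling \eqref{320} that you describe, precisely in order to extract second-variation information about $L_+$; so those pieces of your argument do appear in the paper, just put to a different use.
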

We note that functions in the form $u(x)=\rho(|x|)$, where $\rho:\rone_+\to \rone_+$ is a decreasing and vanishing at infinity,   are called {\it bell-shaped}.  An equivalent way of expressing the same  property is the equality $u=u^*$, where $u^*$ is the decreasing rearrangement in the sense of Riesz. In any case,  the statement of Theorem \ref{vs:10} implies that the only solutions of \eqref{t:20} are translates of bell-shaped functions.

\subsection{The Klein-Gordon-Hartree model}
We also consider the related Klein-Gordon-Hartree model\footnote{Here, we could have considered the more general fractional version of the model, similar to \eqref{1010}.
Since we can only present a complete stability classification only in the case $\be=1$, we prefer to state only the classical form, \eqref{410}.}
but we have preferred to just consider the classical
\begin{equation}
\label{410}
u_{tt}- \De u+u -  I_\al[| u |^p] | u |^{p-2} u =0.
\end{equation}
Standing wave solutions in the form $u(t,x) = e^{i \om t} \Psi$ must of course satisfy the elliptic PDE
\begin{equation}
\label{420}
-\De \Psi +(1-\om^2) \Psi - I_\al[| \Psi |^p] | \Psi |^{p-2} \Psi =0,
\end{equation}
which is of course closely related to \eqref{t:20}, provided $|\om|<1$, which we assume henceforth. A simple rescaling argument, together with Theorem \ref{vs:10}, allows us to conclude that there are bell-shaped
solutions of \eqref{420} in the form
\begin{equation}
\label{eq:psi}
\Psi(x)=(1-\om^2)^{\f{2+\al}{4(p-1)}}\vp(x\sqrt{1-\om^2}),
\end{equation}
where $\vp$ is the set of solutions described in  Theorem \ref{vs:10}.

\subsection{Main results}
Our results concern both the fractional model \eqref{10} and the more classical version \eqref{t:10}.  More precisely, we are interested in the existence properties of solitary waves for \eqref{10}, that is whether and under what conditions, one obtains nice ground state solutions of \eqref{20}.
\subsubsection{The fractional Choquard equation - existence and stability}
This calls for a  generalization of Theorem \ref{vs:10} above, at least in the existence part of it.
We have the following existence result.
  \begin{theorem}
  \label{theo:10}
Let  $\be\in (0,1], \ga\in (0,d)$ and $p>1$. Assume in addition the relationship
  \begin{equation}
  \label{50}
  0<(p-2) d+\ga<2\be.
  \end{equation}
  Then,  there exists a solution of \eqref{20}, $\phi$,  namely a solution of a constrained minimization problem  \eqref{30} below.  Moreover, $\phi$
  is bell-shaped.
  \end{theorem}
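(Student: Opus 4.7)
The plan is to set up \eqref{20} as the Euler-Lagrange equation of a mass-constrained minimization problem on $H^\be(\rd)$. Consistent with the ``normalized'' soliton terminology used in the abstract, I would fix $\|u\|_{L^2}^2=\la$ and minimize the energy $E(u)=\||\nabla|^\be u\|_{L^2}^2 - \f{c_{d,\ga}}{p}\iint |u(x)|^p|u(y)|^p|x-y|^{-\ga}\,dx\,dy$ over the $L^2$-sphere of radius $\sqrt\la$; a minimizer $\phi$ then satisfies \eqref{20} with $\om$ produced as the Lagrange multiplier.

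First I would check that $E$ is well-defined, bounded below, and coercive on the constraint. Hardy-Littlewood-Sobolev controls the double integral by $\||u|^p\|_{L^q}^2$ with $q=\f{2d}{2d-\ga}$, and a fractional Gagliardo-Nirenberg inequality interpolates $\|u\|_{L^{pq}}$ between $\||\nabla|^\be u\|_{L^2}$ and $\|u\|_{L^2}$. The upper assumption $(p-2)d+\ga<2\be$ is exactly what makes the resulting exponent on the gradient term strictly less than one, yielding coercivity on the mass sphere. The matching lower bound $(p-2)d+\ga>0$ ensures that the nonlinear term is effective; by rescaling a fixed bump within the constraint one sees that $\inf E<0$, so that any minimizer is nontrivial and the Lagrange multiplier $\om$ will be nonzero.

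The main obstacle is compactness, since the problem lives on all of $\rd$ and the functional is translation invariant. Rather than invoking the full Lions concentration-compactness principle, I would reduce to bell-shaped competitors via rearrangement: the Riesz rearrangement inequality raises the double-convolution integral under $u\mapsto u^*$, while the fractional P\'olya-Szeg\H{o} inequality (valid precisely for $\be\in(0,1]$) lowers $\||\nabla|^\be u\|_{L^2}$, so $E(u^*)\le E(u)$ together with $\|u^*\|_{L^2}=\|u\|_{L^2}$. One therefore picks a minimizing sequence $\{u_n\}\subset H^\be_{\mathrm{rad}}(\rd)$ of radially decreasing functions. The Strauss-type compact embedding $H^\be_{\mathrm{rad}}(\rd)\hookrightarrow\hookrightarrow L^{pq}(\rd)$, valid for the subcritical exponents produced by \eqref{50}, upgrades weak convergence $u_n\rightharpoonup\phi$ in $H^\be$ to strong convergence in $L^{pq}$; combined with HLS this passes the double integral to the limit. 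Because radially decreasing $L^2$-bounded sequences cannot lose mass to infinity (pointwise decay of radial $H^\be$ functions controls the tails), the constraint $\|\phi\|_{L^2}^2=\la$ is preserved in the limit, and $\phi$ realizes the infimum.

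Finally, the Lagrange multiplier theorem produces $\om\in\rone$ with $(-\De)^\be\phi - c_{d,\ga}(|x|^{-\ga}*|\phi|^p)|\phi|^{p-2}\phi=\om\phi$, which is \eqref{20}; bell-shapedness of $\phi$ is built in by the reduction to radial decreasing competitors in the previous paragraph. The hardest technical input is the passage to the limit inside the nonlocal term against a potential loss of compactness; everything hinges on the subcriticality encoded by \eqref{50} together with the rearrangement reduction, after which the remaining steps are standard variational calculus.
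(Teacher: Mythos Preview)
Your outline matches the paper's almost step for step: the constrained minimization, HLS plus Gagliardo--Nirenberg for coercivity, the rearrangement reduction via Riesz and fractional P\'olya--Szeg\H{o}, compactness of the nonlocal term through strong $L^{pq}$ convergence of the bell-shaped minimizing sequence, and finally the Euler--Lagrange equation. One step, however, fails as written: your claim that the $L^2$ constraint is preserved in the limit because ``radially decreasing $L^2$-bounded sequences cannot lose mass to infinity'' is false. The sequence $u_n(x)=n^{-d/2}\psi(x/n)$, for a fixed smooth bell-shaped $\psi$, is radially decreasing, has fixed $L^2$ norm, is bounded in $H^\be$ (indeed $\||\nabla|^\be u_n\|_{L^2}\to 0$), yet converges weakly to zero. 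The pointwise bound $|u^*(x)|\lesssim \|u\|_{L^2}|x|^{-d/2}$ gives compactness in $L^q$ only for $q>2$, since $|x|^{-d/2}\notin L^2$.

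The paper recovers the constraint by a different mechanism, using precisely the strict negativity of the infimum that you already established. From weak lower semicontinuity of $\||\nabla|^\be\cdot\|_{L^2}$ together with the exact convergence of the nonlocal term one obtains $E(\phi)\le E_\la$ for the weak limit $\phi$, with only $\|\phi\|_{L^2}^2\le\la$ a priori. If the inequality were strict, choose $\mu>1$ with $\|\mu\phi\|_{L^2}^2=\la$; then
\[
E_\la\le E(\mu\phi)=\mu^2\Bigl(\tfrac12\||\nabla|^\be\phi\|_{L^2}^2-\mu^{2p-2}\,\tfrac{c_{d,\ga}}{2p}\,K(\phi,\phi)\Bigr)\le\mu^2 E(\phi)\le\mu^2 E_\la,
\]
contradicting $E_\la<0$. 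So it is $E_\la<0$ that actually pins down the mass, not a radial tail-control argument.
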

Note that the inequality $0 < (p-2) d+\ga$ is exactly equivalent to the requirement $\f{1}{p}<\f{d}{2d-\ga}$ from Theorem \ref{vs:10}. The other inequality however, say for the classical case $\be=1$, is $p<2+\f{2-\ga}{d}$, which is a strict subset of the requirement $\f{d-2}{2d-\ga}<\f{1}{p}$ imposed in Theorem \ref{vs:10}.  So, we do not seem to get all the solitary waves in this way, more on this point below.
In fact, this brings us to our second object of interest, namely the stability of the waves constructed in Theorem \ref{theo:10}. It turns out that the waves constructed in Theorem \ref{theo:10} are spectrally stable as solutions of \eqref{10}. More precisely, we have the following result.
\begin{theorem}
   \label{a:110}
   Let $p>2$. Then, the ground states $\phi$ constructed in Theorem \ref{theo:10}  are spectrally stable as solutions of \eqref{10}.
   \end{theorem}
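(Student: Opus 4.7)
The approach is the Grillakis--Shatah--Strauss (GSS) framework, adapted to the nonlocal Hartree nonlinearity, with the required spectral hypotheses extracted from the variational characterization of $\phi$ as a mass-constrained minimizer. To begin, linearize \eqref{10} around $e^{\im\om t}\phi$ by writing $u(t,x)=e^{\im\om t}(\phi(x)+w_1(t,x)+\im w_2(t,x))$ and dropping quadratic and higher-order terms. Separating real and imaginary parts produces the Hamiltonian system
\beqn
\p_t\begin{pmatrix} w_1 \\ w_2 \end{pmatrix} = \cj\,\ch \begin{pmatrix} w_1 \\ w_2 \end{pmatrix}, \qquad \cj = \begin{pmatrix} 0 & -1 \\ 1 & 0 \end{pmatrix}, \qquad \ch = \begin{pmatrix} L_+ & 0 \\ 0 & L_- \end{pmatrix},
\eeqn
where
\beqn
L_- v = \dea v - \om v - c_{d,\ga}(|\cdot|^{-\ga}*\phi^p)\phi^{p-2}v,
\eeqn
\beqn
L_+ v = L_- v - (p-1)\,c_{d,\ga}(|\cdot|^{-\ga}*\phi^p)\phi^{p-2}v - p\,c_{d,\ga}\,\phi^{p-1}\bigl[|\cdot|^{-\ga}*(\phi^{p-1}v)\bigr].
\eeqn
Spectral stability then amounts to $\si(\cj\ch)\subset \im\ro$.

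Next I would establish the spectral ingredients required by the Kapitula--Kevrekidis--Sandstede (KKS) index identity. Directly from the profile equation \eqref{20}, $L_-\phi = 0$; since $\phi>0$ by Theorem \ref{theo:10}, a Perron--Frobenius-type argument for the nonlocal Schr\"odinger operator $L_-$ (in the spirit of Frank--Lenzmann) forces $0$ to be its lowest eigenvalue, simple, with eigenfunction $\phi$. Hence $L_-\ge 0$ with $\ker L_- = \textup{span}\{\phi\}$. A short computation from \eqref{20} also yields
\beqn
\dpr{L_+\phi}{\phi} = -2(p-1)\,c_{d,\ga}\int_{\rd} (|\cdot|^{-\ga}*\phi^p)\,\phi^p\,dx < 0,
\eeqn
so the Morse index satisfies $n(L_+)\ge 1$. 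The matching upper bound comes from the second-order optimality condition at the constrained minimum \eqref{30}: on the tangent space $\{v : \dpr{v}{\phi}=0\}$ to the mass constraint, the second variation of the Lagrangian equals $\dpr{L_+ v}{v}$ and must therefore be nonnegative. It follows that $n(L_+)\le 1$, and hence $n(L_+)=1$ and $n(\ch) = n(L_+)+n(L_-) = 1$.

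Finally, because $\phi$ arises as a minimizer in the mass-subcritical range $(p-2)d+\ga<2\be$, a scaling argument (together with the fractional Pohozaev identity) delivers the correct sign for $\p_\om\|\phi_\om\|_{L^2}^2$; equivalently, the $1\times 1$ matrix $D=[\p_\om\|\phi_\om\|_{L^2}^2]$ has $n(D)=1$. Plugging these inputs into the KKS instability index formula
\beqn
k_r + 2(k_c + k_i^-) = n(\ch) - n(D) = 1-1 = 0
\eeqn
forces $k_r = k_c = k_i^- = 0$, so that the spectrum of $\cj\ch$ lies entirely on $\im\ro$ with no modes of negative Krein signature, which is precisely spectral stability. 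The principal technical obstacle I anticipate is the non-degeneracy statement $\ker L_+ = \textup{span}\{\p_{x_j}\phi:1\le j\le d\}$: for nonlocal fractional Hartree operators this is not accessible through standard ODE techniques and has to be bootstrapped from the variational construction, e.g.\ via strict minimality of $\phi$ modulo the symmetry group (in the spirit of Weinstein and Lenzmann). A closely related difficulty is the rigorous justification of the slope condition along the family of constrained minimizers, which requires differentiability of the map $\om \mapsto \phi_\om$.
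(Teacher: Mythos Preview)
Your overall strategy coincides with the paper's: linearize to obtain $\cj\ch$, establish $n(L_+)=1$ from the second variation of the constrained problem, $L_-\ge 0$ with $\ker L_-=\mathrm{span}\{\phi\}$, and then feed everything into the GSS/KKS index count. The two technical obstacles you flag at the end, however, are precisely the points where the paper shows no extra work is needed.

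On non-degeneracy of $\ker L_+$: the paper does \emph{not} prove or use it. In the index formula the matrix $D$ has dimension $\dim\ker L$, not $1$; if $\ker L_+=\mathrm{span}\{y_1,\dots,y_l\}$ with $l$ unknown, the corresponding block of $D$ has entries $\dpr{L_-^{-1}y_i}{y_j}$, and since $\ker L_+\perp\phi$ (which follows from $\phi\in\mathrm{Ran}(L_+)$, see below) these lie in $(\ker L_-)^\perp$ where $L_-^{-1}>0$. Hence that entire block is positive definite, so $n(D)\le 1$ regardless of $l$, and $n(D)=1$ iff the single remaining entry $\dpr{L_+^{-1}\phi}{\phi}$ is negative. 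Thus your $1\times 1$ matrix is effectively correct, but for a structural reason rather than because the kernel is known. On differentiability of the family: the paper avoids any implicit-function argument by exploiting the exact scale invariance of the constrained problem. One has the explicit relation $\phi_\la(x)=\la^{(\Ga+(p-1)d)/(2\Ga)}\phi_1(\la^{(p-1)/\Ga}x)$, which is manifestly smooth in $\la$; differentiating the Euler--Lagrange equation in $\la$ gives $L_+[\p_\la\phi_\la]=(\p_\la\om)\phi_\la$, and the closed formulas $\|\phi_\la\|^2=\la\|\phi_1\|^2$ and $\om(\la)=(1+2\be(p-1)/\Ga)E_1\la^{2\be(p-1)/\Ga}$ immediately yield $\dpr{L_+^{-1}\phi}{\phi}<0$.
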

   {\bf Remark:} The condition $p>2$ appears to be of a technical nature and it is likely removable, if one knows extra information about the waves constructed in Theorem \ref{theo:10} - similar to Lemma \ref{le:2} and \ref{le:vlado} below.

 The waves constructed in Theorem   \ref{theo:10} are constructed as  the minimizers of the problem   $\inf\limits_{\|u\|_{L^2}=\la} E(u)$ (dubbed ``normalized solutions'' in \cite{MVS1}), where the energy functional is given by
 $$
E(u):=\f{1}{2} \int_{\rd} [|\nabla u(x)|^2+|u(x)|^2] dx  - \f{c_{d,\ga}}{2p}  \int_{\rd\times \rd }
  \f{|u(x)|^{p} |u(y)|^{p}}{|x-y|^\ga} dx dy.
$$
They  turn out to be spectrally stable, per the claim of Theorem \ref{a:110}.  It so happens {\it these are all the stable solitary waves there are}, at least in the classical case $\be=1$, as we discuss now.

\subsubsection{The classical Choquard equation - classification of the stable ground states}

In the classical Choquard case, \eqref{t:20}, we provide  a full description of the localized and regular solutions, given in Theorem \ref{vs:10}. A natural question is then: which of these waves are spectrally stable as solutions of \eqref{t:10}? The full classification is provided in the following theorem.
\begin{theorem}
\label{theo:60}
Let $d\geq 1, \al \in (0,d)$ and $p\in (1, \infty)$, so that $\f{d-2}{d+\al}<\f{1}{p}<\f{d}{d+\al}$.

Consider any solution $\vp$ of \eqref{t:20} guaranteed by Theorem \ref{vs:10}. Then, the solution $e^{- i t} \vp$ of the time dependent Hartree problem  \eqref{t:10} is spectrally stable \underline{if and only if}
$$
\Ga:=2-\ga-(p-2) d=2+\al-(p-1) d \geq 0.
$$
More specifically,
\begin{itemize}
\item If $d=1,2$, the MVS waves are stable if and only if
$
1+\f{\al}{d}<p\leq 1+\f{2+\al}{d}
$
and unstable in the complementary range $1+\f{2+\al}{d}<p<\infty$. The instability presents itself as a   simple  growing mode.
\item If $d\geq 3$, the MVS waves are stable if
$
1+\f{\al}{d}<p\leq 1+\f{2+\al}{d}
$
and unstable in the complementary range
$1+\f{2+\al}{d}<p<1+\f{2+\al}{d-2}$. The instability presents itself as a   simple  growing mode.
\end{itemize}
\end{theorem}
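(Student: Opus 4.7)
My plan is to cast the stability question as a spectral problem for the Hamiltonian linearization and invoke the Vakhitov-Kolokolov / Grillakis-Shatah-Strauss dichotomy, whose hypotheses reduce to a Morse-index count for the real operator $L_+$ and the sign of a single scalar function of $\om$. Writing $u(t,x)=e^{-it}(\vp(x)+w_1(t,x)+iw_2(t,x))$ in \eqref{t:10}, the profile equation kills the $O(1)$ terms and collecting the linear terms produces $\p_t(w_1,w_2)^\top = \cj\cl(w_1,w_2)^\top$ with $\cj=\left(\begin{smallmatrix}0&-1\\1&0\end{smallmatrix}\right)$, $\cl=\mathrm{diag}(L_+,L_-)$, and
\begin{align*}
L_- w &= -\De w + w - I_\al[\vp^p]\,\vp^{p-2}\,w, \\
L_+ w &= -\De w + w - (p-1)\,I_\al[\vp^p]\,\vp^{p-2}\,w - p\,\vp^{p-1}\,I_\al[\vp^{p-1}w].
\end{align*}
Spectral stability of $e^{-it}\vp$ is then equivalent to the absence of eigenvalues of $\cj\cl$ in the open right half plane.

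Next, I would pin down the kernels and Morse indices of $L_\pm$. The profile equation reads $L_-\vp=0$, and since $\vp$ is bell-shaped and strictly positive by Theorem \ref{vs:10}, it is the Perron-Frobenius ground state of $L_-$; thus $n(L_-)=0$ with $\ker L_-=\mathrm{span}\{\vp\}$. Translation invariance gives $L_+\,\p_{x_j}\vp=0$ for $j=1,\dots,d$, and testing $L_+$ against $\vp$ itself yields
\[
\dpr{L_+\vp}{\vp}=-2(p-1)\int_{\rd} I_\al[\vp^p]\,\vp^p\,dx<0,
\]
so $n(L_+)\geq 1$. In parallel, the rescaling $\vp_\om(x)=\om^{(2+\al)/(4(p-1))}\vp(\sqrt{\om}\,x)$ converts the $\om$-profile into \eqref{t:20}, and a change of variables gives
\[
\|\vp_\om\|_{L^2}^2 = \om^{\Ga/(2(p-1))}\|\vp\|_{L^2}^2,
\]
so $\f{d}{d\om}\|\vp_\om\|_{L^2}^2\big|_{\om=1}$ has the sign of $\Ga$.

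Granting the sharp count $n(L_+)=1$ together with $\ker L_+=\mathrm{span}\{\p_{x_j}\vp\}_{j=1}^d$, the Pego-Weinstein / Grillakis-Shatah-Strauss framework closes the argument. If $\Ga>0$ the spectrum of $\cj\cl$ is confined to $i\rone$ and the wave is spectrally stable; if $\Ga=0$ the VK derivative vanishes and the zero eigenvalue of $\cj\cl$ acquires higher algebraic multiplicity while no eigenvalues leave the imaginary axis, so spectral stability is preserved; if $\Ga<0$ the wrong sign of the Jost-type function constructed from the resolvents of $L_+$ and $L_-$ forces a single simple real positive eigenvalue of $\cj\cl$, i.e., a simple growing mode. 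Intersecting $p\leq 1+\f{2+\al}{d}$ (equivalently $\Ga\geq 0$) with the existence window from Theorem \ref{vs:10} --- namely $1+\f{\al}{d}<p<\infty$ for $d=1,2$ and $1+\f{\al}{d}<p<1+\f{2+\al}{d-2}$ for $d\geq 3$ --- yields exactly the two itemized ranges.

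The principal obstacle is verifying the Morse count $n(L_+)=1$ and the non-degeneracy $\ker L_+=\mathrm{span}\{\p_{x_j}\vp\}$. The MVS construction of $\vp$ produces a Lagrange-multiplier structure that is not directly aligned with the Hessian $L_+$ of the Hartree energy, so the usual variational shortcut is unavailable. I would combine (a) an angular decomposition into spherical harmonics --- exploiting bell-shapedness to treat each angular sector as a scalar problem with a nonlocal perturbation, on which $L_+$ restricted to the nonzero angular momentum sectors is non-negative with kernel exactly the translation modes --- with (b) the existing non-degeneracy results for the Choquard ground state (Lenzmann, Xiang and collaborators) to pin down the zero angular momentum sector. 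Once this step is settled, the remaining ingredients (Morse propagation to $\cj\cl$, the VK scaling, and matching the existence window) follow standard templates.
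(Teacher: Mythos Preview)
Your linearization, the identification of $L_\pm$, the computation $\dpr{L_+\vp}{\vp}<0$, and the Vakhitov--Kolokolov scaling giving $\mathrm{sgn}\,\partial_\om\|\vp_\om\|_{L^2}^2=\mathrm{sgn}\,\Ga$ all match the paper. The divergence is in what you flag as the ``principal obstacle'': you assume that closing the index count requires both $n(L_+)=1$ \emph{and} the non-degeneracy $\ker L_+=\mathrm{span}\{\partial_j\vp\}$, and you propose to obtain these via an angular decomposition together with cited non-degeneracy results. The paper avoids both difficulties, and your belief that ``the MVS Lagrange-multiplier structure is not directly aligned with $L_+$'' is the key misconception.

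First, $n(L_+)=1$ follows directly from the MVS construction. The MVS wave is $\vp=t_0\Phi$ where $\Phi$ minimizes the quotient $I[u]/M[u]^{1/p}$ with $I[u]=\|\nabla u\|^2+\|u\|^2$ and $M[u]=\dpr{I_\al[|u|^p]}{|u|^p}$. Expanding this quotient to second order at $\Phi$ along perturbations $h\perp I_\al[\Phi^p]\Phi^{p-1}$ and using the scaling $\vp=(I_0/M_0)^{1/(2p-2)}\Phi$ yields precisely $\dpr{L_+ h}{h}\ge 0$ on that codimension-one subspace. Combined with $\dpr{L_+\vp}{\vp}<0$ this gives $n(L_+)=1$ with no spherical-harmonic analysis.

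Second, non-degeneracy of $\ker L_+$ is not needed. In the Kapitula--Kevrekidis--Sandstede form of the index count, the number of unstable eigenvalues of $\cj L$ equals $n(L)-n(D)$, where $D$ is the matrix $D_{ij}=\dpr{L^{-1}\cj^{-1}y_i}{\cj^{-1}y_j}$ indexed by \emph{any} basis $\{y_j\}$ of $\ker L$. Whatever $\ker L_+$ turns out to be, each of its elements $y$ produces the entry $\dpr{L_-^{-1}y}{y}$, and since $y\perp\vp$ (otherwise $n(L_+)\ge 2$) this is strictly positive by $L_-\ge 0$. Hence the only entry of $D$ that can be negative is the one coming from $\vp\in\ker L_-$, namely $\dpr{L_+^{-1}\vp}{\vp}$; a scaling computation (your $\vp_\om$ family, or equivalently differentiating the profile equation in $\om$) shows this equals $-\tfrac{\Ga}{4(p-1)}\|\vp\|^2$, which also confirms $\vp\perp\ker L_+$ so that $L_+^{-1}\vp$ is well-defined. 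Thus $n(D)=1$ iff $\Ga>0$, and the dichotomy follows without ever resolving $\ker L_+$.
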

A few remarks are in order.
\begin{enumerate}
\item Note that the statement in Theorem \ref{theo:60} agrees well with Theorem \ref{theo:10}. In particular, we find in Theorem \ref{theo:60} that the only stable solitons for the Hartree model are the normalized solutions - that is, those obtained in the range $\Ga=2+\al-(p-1) d\geq 0$.
\item Some of the instability results have been previously established by other methods. In particular, in the case $d=3$ and in the optimal range
$\f{5+\al}{3}<p<3+\al$, strong instability was established in \cite{CG}, see also a very recent extension of these results to Hartree models with potentials in \cite{CSM}. In these works, the authors employ  a virial identity type arguments, which show that there exists
data arbitrarily close to the soliton, for which the solution blows up in finite time.

\item
Note that in the limit $\al\to 0+$, we  recover the stability results for the NLS model (with power non-linearity of order $q=2p-1$). This is indeed the case, since (formally as $\al\to 0+$) one obtains stability for $1<p<1+\f{2}{d}$, which is equivalent to $q=2p-1\in (1,1+\f{4}{d})$, the well-known Schr\"odinger result.
\item In the case $\Ga=0$ or equivalently $p=1+\f{2+\al}{d}$, we discover that there is an extra pair of  elements in the generalized kernel of
$L_+$, $gKer[L_+]$.  Indeed, using $p$ as a bifurcation parameter, one sees that starting from $p> 1+\f{2+\al}{d}$, there is a pair of stable/unstable eigenvalues which approaches the origin and it turns into a  pair of purely imaginary eigenvalues for $p<1+\f{2+\al}{d}$. At $p=1+\f{2+\al}{d}$, this pair introduces extra two dimensions  in   $gKer[L_+]$.   This is very similar to the pseudo-conformal symmetry for the standard Schr\"odinger equation, which arises only for $p=1+\f{4}{d}$.   We thus conjecture that there is an extra symmetry, for this particular case, which generates this extra algebraic multiplicity of the zero eigenvalue.
 \end{enumerate}
 Our next result is a complete characterization of the  spectral stability for the waves $e^{i \om t} \Psi$ of \eqref{420}  in the Klein-Gordon-Hartree context.
 \begin{theorem}
 \label{theo:50}
 Let $d\geq 1, \al \in (0,d), \om \in (-1,1)$, $p\in (1, \infty)$ and $\f{d-2}{d+\al}<\f{1}{p}<\f{d}{d+\al}$.

Let $\vp$ is a MVS solution of  \eqref{t:20}, which  exists in the specified range of $p$ according to Theorem \ref{vs:10}.   Then, the solution $e^{i \om t} \Psi(x)$ described in \eqref{eq:psi}  of the time dependent Klein-Gordon-Hartree problem  \eqref{410} is spectrally stable \underline{if and only if}
$$
\Ga>0,  \sqrt{\f{p-1}{p-1+\Ga}}=\sqrt{\f{p-1}{2+\al-(p-1)(d-1)}}<|\om|<1.
$$
More precisely,
\begin{itemize}
\item If $d=1,2$, the MVS waves are unstable, if $1+\f{2+\al}{d}<p<\infty$ or $p\in (1+\f{\al}{d}, 1+\f{2+\al}{d}]$ and $0\leq |\om|<\sqrt{\f{p-1}{2+\al-(p-1)(d-1)}}$. Equivalently, the waves are stable, exactly when
$$
p\in (1+\f{\al}{d}, 1+\f{2+\al}{d}), \sqrt{\f{p-1}{2+\al-(p-1)(d-1)}}<|\om|<1.
$$
\item If $d\geq 3$, the waves are unstable for $p\in (1+\f{2+\al}{d}, 1+\f{2+\al}{d-2})$ or $p\in (1+\f{\al}{d}, 1+\f{2+\al}{d}]$ and $0\leq |\om|<\sqrt{\f{p-1}{2+\al-(p-1)(d-1)}}$. Equivalently, stability occurs exactly for
$$
p\in (1+\f{\al}{d}, 1+\f{2+\al}{d}),\ \  \sqrt{\f{p-1}{2+\al-(p-1)(d-1)}}<|\om|<1.
$$
\end{itemize}
 \end{theorem}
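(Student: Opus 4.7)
The plan is to linearize around the Klein-Gordon standing wave, rescale via \eqref{eq:psi} to reduce the spectral problem to a matrix pencil in the fixed Schr\"odinger operators $\wt\cl_\pm$ already controlled in the proof of Theorem \ref{theo:60}, then apply a Grillakis-Shatah-Strauss style index count expressed through the derivative of the charge $Q(\om):=\om\|\Psi_\om\|_{L^2}^2$, whose critical points can be located explicitly by the scaling.

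Concretely, writing $u=e^{i\om t}(\Psi+\ve(\phi+i\psi))$ with real $\phi,\psi$ and retaining the $O(\ve)$ terms yields the coupled system
\beq\label{kg:lin}
\phi_{tt}-2\om\psi_t+\cl_+\phi=0,\qquad \psi_{tt}+2\om\phi_t+\cl_-\psi=0,
\eeq
where $\cl_-:=-\De+(1-\om^2)-I_\al[\Psi^p]\Psi^{p-2}$ and $\cl_+$ is its $\phi$-analogue, also containing the Hartree cross-term $p\Psi^{p-1}I_\al[\Psi^{p-1}\,\cdot\,]$. The ansatz $(\phi,\psi)=e^{\la t}(\phi_0,\psi_0)$ converts \eqref{kg:lin} into the quadratic pencil $(\cl_++\la^2)\phi_0=2\om\la\psi_0$, $(\cl_-+\la^2)\psi_0=-2\om\la\phi_0$. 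Substituting $\mu:=1-\om^2$, $y=\mu^{1/2}x$, $\phi_0(x)=\wt\phi(y)$, $\psi_0(x)=\wt\psi(y)$, $\be:=\la/\sqrt\mu$, $\nu:=\om/\sqrt\mu$, and using both the homogeneity of $|\cdot|^{-\ga}$ and $\Psi(x)=\mu^{(2+\al)/(4(p-1))}\vp(y)$, this pencil becomes
\beqn
(\wt\cl_++\be^2)\wt\phi=2\nu\be\wt\psi,\qquad (\wt\cl_-+\be^2)\wt\psi=-2\nu\be\wt\phi,
\eeqn
where $\wt\cl_\pm$ are the $\om$-independent Schr\"odinger linearizations around the MVS wave $\vp$ of \eqref{t:20}. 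The spectral data needed -- namely $n(\wt\cl_-)=0$ with $\ker\wt\cl_-=\mathrm{span}\{\vp\}$, $n(\wt\cl_+)=1$ with $\ker\wt\cl_+=\mathrm{span}\{\p_j\vp\}$, and no embedded neutral eigenvalues -- is precisely the package underlying Theorem \ref{theo:60}.

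I would then invoke the Grillakis-Shatah-Strauss criterion adapted to Klein-Gordon: $e^{i\om t}\Psi_\om$ is spectrally stable iff $Q'(\om)<0$, while $Q'(\om)>0$ forces exactly one pair of simple real eigenvalues $\pm\la_0$ of the linearization. By \eqref{eq:psi},
\beqn
\|\Psi_\om\|_{L^2}^2=(1-\om^2)^A\|\vp\|_{L^2}^2,\qquad A:=\f{2+\al}{2(p-1)}-\f{d}{2}=\f{\Ga}{2(p-1)},
\eeqn
so
\beqn
Q'(\om)=\|\vp\|_{L^2}^2\,(1-\om^2)^{A-1}\bigl[\,1-(1+2A)\om^2\,\bigr],\qquad 1+2A=\f{p-1+\Ga}{p-1}.
\eeqn
A case split on the signs of $\Ga$ and $p-1+\Ga$ then delivers the theorem. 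For $\Ga\le 0$, either $1+2A\le 0$ or $1/(1+2A)\ge 1$, so $Q'>0$ on $|\om|<1$ and the wave is unstable; for $\Ga>0$ one has $1/(1+2A)<1$ and stability is equivalent to $\om^2>(p-1)/(p-1+\Ga)$, the stated threshold. The upper cutoff $p<1+(2+\al)/(d-2)$ when $d\ge 3$ is inherited from Theorem \ref{vs:10}.

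The hard part will be the index-count step: promoting the abstract GSS/Pego-Weinstein criterion into a genuinely spectral stability statement with a \emph{simple} unstable real eigenvalue for the non-local quadratic pencil above. The non-degeneracy inputs this requires -- one-dimensional $\ker\wt\cl_-$, $d$-dimensional $\ker\wt\cl_+$, absence of neutral embedded eigenvalues, and the transversality $\langle\wt\cl_+^{-1}\vp,\vp\rangle\neq 0$ which identifies $d''(\om)=-Q'(\om)$ -- are already validated in the spectral bookkeeping for Theorem \ref{theo:60}, so no fundamentally new operator-theoretic work should be needed beyond a careful transcription from the Schr\"odinger pencil to the Klein-Gordon one.
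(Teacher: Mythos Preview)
Your approach is essentially the paper's: both linearize, rescale via \eqref{eq:psi} to land on the $\om$-independent operators $L_\pm$ around the MVS wave $\vp$, and then apply a GSS-type index count. The only cosmetic difference is the packaging of the final step. The paper works with the Kapitula--Kevrekidis--Sandstede matrix $D$ and computes its sole potentially negative entry $D_{11}=\tfrac{4\om^2}{1-\om^2}\dpr{L_+^{-1}\vp}{\vp}+\|\vp\|^2$, feeding in $\dpr{L_+^{-1}\vp}{\vp}=-\tfrac{\Ga}{4(p-1)}\|\vp\|^2$ from Lemma~\ref{le:n10}; you instead phrase the same criterion as the slope condition $Q'(\om)<0$ and compute $Q'(\om)$ directly from the scaling of $\|\Psi_\om\|_{L^2}^2$. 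These are identical up to the positive factor $(1-\om^2)^{A}$, so the algebra and the conclusion coincide.

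One correction: you list ``$d$-dimensional $\ker\wt\cl_+$'' and ``absence of neutral embedded eigenvalues'' among the inputs ``already validated in the spectral bookkeeping for Theorem~\ref{theo:60}.'' Neither is established in the paper. The paper explicitly sidesteps non-degeneracy of $\vp$ (see Section~\ref{sec:4.2}): it allows $\ker L_+$ to be larger than $\mathrm{span}\{\p_j\vp\}$ and shows that any extra kernel directions contribute only \emph{positive} blocks to $D$ via the positivity of $L_-^{-1}$ on $(\ker L_-)^\perp$, so $n(D)$ is still governed by $D_{11}$ alone. Your slope-condition formulation does not need the exact kernel dimension either, so the argument survives, but you should not claim those inputs as proved.
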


\section{Preliminaries}
\label{sec:prelim}
The Fourier transform and its inverses are taken to be in the form
$$
\hat{f}(\xi)=\int_{\rd} f(x) e^{-2\pi i x\cdot \xi} dx, f(x) =\int_{\rd} \hat{f}(\xi) e^{2\pi i x\cdot \xi} d\xi.
$$
The operators $(-\De)^\be$ are defined through their multipliers (acting on Schwartz functions $f\in \cs$) as follows
$$
\widehat{(-\De)^\be f}(\xi)=|2\pi \xi|^{2 \be} \hat{f}(\xi).
$$
Note that sometimes, we will use instead the Zygmund operator $|\nabla|:=\sqrt{-\De}$.
We   make heavy use of the symmetric decreasing rearrangements of a function $f$, denoted by $f^*$ .
This is a classical object, see for example \cite{LL}, Chapter 3.  In that regard, recall  that $\|f^*\|_{L^p}=\|f\|_{L^p}$, for $1\leq p\leq \infty$ .  In addition, we make use  of the classical inequality
  \begin{equation}
  \label{RR}
   \int_{\rd} f(x) g(x) dx \leq \int_{\rd} f^*(x) g^*(x) dx,
   \end{equation}
  for any non - negative functions $f,g$ decaying sufficiently rapidly at infinity (see Theorem 3.4 in \cite{LL}). If $f$ is a strictly symmetric decreasing function then we have
  equality in \eqref{RR} only if  $g=g^*$ a.e.
 A more sophisticated version of \eqref{RR} is the Riesz's rearrangement inequality (see Theorem 3.7, \cite{LL})
 \begin{equation}
 \label{R}
  \int_{\rd\times \rd} f(x) g(x-y) h(y) dx dy\leq  \int_{\rd\times \rd} f^*(x) g^*(x-y) h^*(y) dx dy.
 \end{equation}
If one of the functions is in fact strictly symmetric decreasing, then equality in \eqref{R} is possible if the other two functions are a fixed
 translate of a symmetric decreasing function.

 The Polya-Szeg\"o inequality states that $\|\nabla  u\|_{L^2(\rd)}\geq \|\nabla u^*\|_{L^2(\rd)}$.
 Here, we  present an extension of this inequality for fractional gradients. This is a relatively recent result. In fact, there is a proof of this fact in \cite{FL}, using completely monotone maps. Here we present a simpler proof, based on the representation of $\naa f$
  in terms of averages of the standard heat kernel operators $e^{t \De}$.
  \begin{proposition}
  \label{prop:10}
  Let $\be \in (0,1]$, $d\geq 1$.  Then, for all functions $u\in \dot{H}^\be$, we have that its decreasing rearrangement $u^*\in \dot{H}^\be$ and moreover
  \begin{equation}
  \label{PS}
  \|\naa u\|_{L^2(\rd)}\geq \|\naa u^*\|_{L^2(\rd)}.
  \end{equation}
  In addition, equality is achieved if and only if there exists $x_0\in \rd$ and a decreasing  function $\rho:\rone_+ \to \rone_+$, so that $u(x)=\rho(|x-x_0|)$.
  \end{proposition}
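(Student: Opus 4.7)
The strategy is to derive \eqref{PS} from the Riesz rearrangement inequality \eqref{R} scale-by-scale, after representing $\naa$ as an average of heat semigroups $e^{t\De}$. The case $\be=1$ is the classical Polya-Szeg\"o inequality, so I only need to treat $\be\in(0,1)$, for which the standard subordination identity
$$
r^\be = c_\be \int_0^\infty (1-e^{-tr})\,\frac{dt}{t^{1+\be}},\qquad r>0,\ c_\be=\tfrac{\be}{\Ga(1-\be)}>0,
$$
holds. Applying this with $r=|2\pi\xi|^2$, multiplying by $|\widehat u(\xi)|^2$ and invoking Parseval, I will obtain the integral representation
$$
\|\naa u\|_{L^2}^2 \;=\; c_\be \int_0^\infty \frac{1}{t^{1+\be}}\Big(\|u\|_{L^2}^2-\dpr{e^{t\De}u}{u}\Big)\,dt,
$$
where the exchange of $d\xi$ and $dt$ integrals is justified by Tonelli, since each $1-e^{-t|2\pi\xi|^2}$ is non-negative.

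Next, I reduce the inequality to a scale-by-scale comparison. Writing $e^{t\De}u=G_t*u$ for the Gaussian heat kernel $G_t$, which is strictly positive and strictly symmetric decreasing (so $G_t=G_t^*$), Riesz's rearrangement inequality \eqref{R} applied to the triple $|u|,G_t,|u|$ gives
$$
\dpr{e^{t\De}u}{u}\;\leq\; \iintl_{\rd\times\rd} |u(x)|\,G_t(x-y)\,|u(y)|\,dx\,dy \;\leq\; \dpr{e^{t\De}u^*}{u^*},
$$
for every $t>0$, while $\|u\|_{L^2}=\|u^*\|_{L^2}$. Plugging this into the integral representation above (applied to both $u$ and $u^*$) yields \eqref{PS}; the same chain of estimates also shows $u^*\in\dot H^\be$ whenever $u\in\dot H^\be$, since the right-hand side is then the smaller of the two quantities.

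For the equality case, suppose $\|\naa u\|_{L^2}=\|\naa u^*\|_{L^2}$. The integrand in the subordination representation is pointwise non-increasing under the symmetrization $u\mapsto u^*$, so equality of the $dt$-integrals forces the integrands to agree for a.e.\ $t>0$, which continuity of $t\mapsto \dpr{e^{t\De}u}{u}$ upgrades to every $t>0$. Fixing any such $t$, the strict symmetric decrease of $G_t$ together with the rigidity statement for Riesz's inequality (quoted immediately after \eqref{R}) forces $u$ to be a fixed translate of a symmetric decreasing function, which is the desired form $u(x)=\rho(|x-x_0|)$.

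The main obstacle I expect is the equality case: one has to apply the rigidity in Riesz's inequality with the same function $u$ appearing twice against the strictly symmetric decreasing $G_t$, and extract that $u$ itself (not merely $|u|$) is a translate of $u^*$. Since the setup of Proposition \ref{prop:10} and the statement of \eqref{R} are framed for non-negative functions, this reduction should go through once one works with $|u|$ and uses strict monotonicity of $G_t$ to anchor the translation parameter.
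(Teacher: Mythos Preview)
Your proposal is correct and follows essentially the same approach as the paper: the subordination formula expressing $|\nabla|^{2\be}$ as a $dt$-average of $1-e^{t\De}$, followed by a scale-by-scale application of the Riesz rearrangement inequality \eqref{R} to the heat kernel convolution $\dpr{e^{t\De}u}{u}$. Your treatment is in fact slightly more careful than the paper's in justifying the exchange of integrals via Tonelli and in handling the equality case through continuity in $t$ and the passage from $|u|$ to $u$.
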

  {\bf Note:} The classical Polya-Szeg\"o inequality is the particular case $\be=1$.
  \begin{proof}
  Let $\be<1$ and define
  $$
  c_\be:=\int_0^\infty \f{1-e^{-y}}{y^{1+\be}} dy.
  $$
  Setting $y=4\pi^2 |\xi|^2 t$, we have the representation
  $$
  (2\pi |\xi|)^{2\be}=\f{1}{c_\be} \int_0^\infty \f{1-e^{-4\pi^2 t|\xi|^2}}{t^{1+\be}} dt.
  $$
 Equivalently
  $$
  |\nabla|^{2\be}=\f{1}{c_\be} \int_0^\infty \f{1-e^{t\De}}{t^{1+\be}} dt.
  $$
 Since $e^{t\De} f= K_t*f$ and  $K_t(x)=(4\pi t)^{-d/2} e^{-|x|^2/(4t)}$ is strictly symmetric decreasing, we have by \eqref{R} that
 $\dpr{e^{t \De} u}{u}=\dpr{K_t*u}{u}\leq \dpr{K_t*u^*}{u^*}=\dpr{e^{t \De} u^*}{u^*}$ and equality is achieved only if $u(x)=\rho(|x-x_0|)$ for a decreasing function $\rho:\rone_+\to \rone_+$ and $x_0\in\rd$. Thus,
 \begin{eqnarray*}
 \||\nabla|^\be u\|^2 &=& \dpr{|\nabla|^{2\be} u}{u}=\f{1}{c_\be} \int \f{\dpr{u}{u}-\dpr{e^{t \De} u}{u}}{t^{1+\be}} dt\geq \f{1}{c_\be} \int \f{\dpr{u^*}{u^*}-\dpr{e^{t \De} u^*}{u^*}}{t^{1+\be}} dt=\\
 &=& \dpr{|\nabla|^{2\be} u^*}{u^*}=\||\nabla|^\be u^*\|^2.
 \end{eqnarray*}
 Moreover, equality is possible, only if $u(x)=\rho(|x-x_0|)$, as explained above.

  \end{proof}

\section{Existence and  properties of the solutions to the fractional Hartree model}
\label{sec:3}
  For $\la>0$, introduce  the optimization problem
 \begin{equation}
  \label{30}
  \left\{
  \begin{array}{l}
  E(u):=\f{1}{2} \|\naa u\|_{L^2(\rd)}^2 - \f{c_{d,\ga}}{2 p} \int_{\rd\times \rd }
  \f{|u(x)|^{p} |u(y)|^{p}}{|x-y|^\ga} dx dy \to \min \\
  \textup{subject to} \ \ \int_{\rd} |u(x)|^2  dx = \la.
  \end{array}
  \right.
  \end{equation}
At least formally, one can see that the associated Euler-Lagrange equation\footnote{ for potential minimizers $\phi$, whose existence is still to be established} is exactly \eqref{20}. We are now ready to proceed with the proof of  the existence result in Theorem \ref{theo:10}.
\subsection{Existence of solutions for the constrained minimization problem}
More precisely, we have the following.
\begin{proposition}
\label{prop:24}
Let  $\be\in (0,1], \ga\in (0,d)$ and $p>1$ and the relation \eqref{50} holds. Then, the optimization problem \eqref{30} has a bell-shaped solution $\vp$. Moreover, for every solution $u_0$ of \eqref{30}, there exists $x_0\in \rd$, so that $u_0=\pm \rho(|x-x_0|)$, where $\rho:\rone_+\to \rone_+$ is a decreasing and vanishing function. Finally, for every $\la>0$, $E_\la=\inf\limits_{\|u\|^2=\la} E(u)=E(\vp)<0$.
\end{proposition}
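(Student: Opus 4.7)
The plan is a classical symmetrization-plus-compactness argument, using the radial rearrangement from Proposition~\ref{prop:10} to bypass the dichotomy step of concentration-compactness. I split the argument into four short steps.

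\emph{Finiteness and strict negativity of $E_\la$.} To bound $E_\la$ from below I would combine the Hardy-Littlewood-Sobolev inequality $\dpr{I_\al[|u|^p]}{|u|^p}\lesssim \|u\|_{L^q}^{2p}$ with $q=\f{2dp}{2d-\ga}$ and the Gagliardo-Nirenberg inequality $\|u\|_{L^q}\lesssim\|u\|_{L^2}^{1-\theta}\|\naa u\|_{L^2}^\theta$. Using $\|u\|_{L^2}^2=\la$, a direct computation gives $2p\theta=\f{(p-2)d+\ga}{\be}$, so the hypothesis \eqref{50} is exactly $2p\theta<2$ and the positive kinetic term dominates the nonlinear one, giving $E_\la>-\infty$. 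To produce $E_\la<0$ I would plug the mass-preserving dilation $u_\mu(x)=\mu^{d/2}u(\mu x)$ of any admissible $u$ into $E$: the kinetic part scales like $\mu^{2\be}$ and the Hartree part like $\mu^{(p-2)d+\ga}$, and since $(p-2)d+\ga<2\be$ the negative term wins as $\mu\to 0^+$.

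\emph{Symmetrization of a minimizing sequence.} I would take a minimizing sequence $u_n$ and replace it by $u_n^*$. By Proposition~\ref{prop:10} the kinetic term does not grow, and by Riesz's inequality \eqref{R} (with kernel $|x|^{-\ga}$, which equals its own rearrangement) the nonlocal term does not shrink; thus $u_n^*$ is radial, decreasing, still minimizing, and bounded in $H^\be$ by the previous step. After extraction $u_n^*\rightharpoonup\phi$ weakly in $H^\be$, and by the compact embedding of the radial subspace of $H^\be$ into $L^q$ (the open Sobolev range $(2,\f{2d}{d-2\be})$ contains our $q$ thanks to \eqref{50}), the convergence is strong in $L^q$. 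HLS then upgrades this to $\dpr{I_\al[|u_n^*|^p]}{|u_n^*|^p}\to\dpr{I_\al[|\phi|^p]}{|\phi|^p}$.

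\emph{No vanishing and no mass loss - the main obstacle.} If $\phi\equiv 0$, strong $L^q$ convergence kills the Hartree term and $\liminf E(u_n^*)\ge 0$, contradicting $E_\la<0$; hence $\mu:=\|\phi\|_{L^2}^2>0$. Suppose now $\mu<\la$, and set $a:=\sqrt{\la/\mu}>1$, $\tilde\phi:=a\phi$, so $\|\tilde\phi\|_{L^2}^2=\la$. Because $a^{2p}>a^2$ and the Hartree term of $\phi$ is positive,
\beqn
E(\tilde\phi)=\f{a^2}{2}\|\naa\phi\|_{L^2}^2-\f{c_{d,\ga}\,a^{2p}}{2p}\dpr{I_\al[|\phi|^p]}{|\phi|^p}<a^2\,E(\phi).
\eeqn
Weak lower-semicontinuity of the kinetic term and strong convergence of the Hartree term give $E(\phi)\le\liminf E(u_n^*)=E_\la<0$; since $a^2>1$ and $E(\phi)<0$ this yields $a^2 E(\phi)<E(\phi)\le E_\la$, so $E(\tilde\phi)<E_\la$, contradicting $E(\tilde\phi)\ge E_\la$. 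Hence $\mu=\la$, and $u_n^*\rightharpoonup\phi$ in $L^2$ with equal norms upgrades to strong $L^2$ convergence; $\phi$ is then a bell-shaped minimizer with $E(\phi)=E_\la<0$. (The standard alternative here would be to prove strict subadditivity $E_{\la_1+\la_2}<E_{\la_1}+E_{\la_2}$; the multiplicative rescaling above uses $p>1$ directly and avoids that analysis.)

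\emph{Classification of arbitrary minimizers.} For a general solution $u_0$ of \eqref{30} I would apply the same rearrangement chain to $u_0$ itself; all inequalities along the way must then be equalities. In particular $\|\naa u_0\|_{L^2}=\|\naa|u_0|^*\|_{L^2}$ (a global phase of $u_0$ is stripped by the diamagnetic inequality for $\naa$), and the equality statement in Proposition~\ref{prop:10} forces $|u_0|(x)=\rho(|x-x_0|)$ for some $x_0\in\rd$ and a decreasing $\rho:\rone_+\to\rone_+$, giving $u_0=\pm\rho(|\cdot-x_0|)$ as claimed.
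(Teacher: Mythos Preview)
Your proposal is correct and follows essentially the same route as the paper: HLS $+$ Gagliardo--Nirenberg for well-posedness, the mass-preserving dilation for $E_\la<0$, symmetrization via Proposition~\ref{prop:10} and Riesz, radial compactness in $L^q$, and the multiplicative rescaling $\phi\mapsto a\phi$ to rule out mass loss. The only cosmetic difference is that the paper obtains the $L^q$ compactness by writing down the explicit radial decay $|u_n^*(x)|\le C_d|x|^{-d/2}$ from the $L^2$ constraint (rather than quoting a Strauss-type embedding), and your Step~4 on the classification of \emph{all} minimizers via the equality cases actually covers a claim that the paper's proof block does not spell out.
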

  \begin{proof}(Theorem \ref{theo:10})
  First, we show that the constrained minimization problem \eqref{30} is well-posed. That is, the quantity $E(u)$ is bounded from below, when $u$ obeys the constraint $\int_{\rd} |u(x)|^2 dx=\la>0$. To this end, note that we can interpret the potential energy term (or Hartree interaction term) as follows
  $$
  \int_{\rd\times \rd }
  \f{|u(x)|^{p} |u(y)|^{p}}{|x-y|^\ga} dx dy=\dpr{|\cdot|^{-\ga}*|u|^{p}}{|u|^{p}}.
  $$
  Thus, by H\"older's and  the Hardy-Littlewood-Sobolev inequalities, we have
  $$
  \dpr{|\cdot|^{-\ga}*|u|^{p}}{|u|^{p}}\leq \|u\|_{L^{ p r'}}^{2 p}  \||\cdot|^{-\ga}\|_{L^{\f{d}{\ga}, \infty}}=
  C_d \|u\|_{L^{ p r'}}^{2 p},
  $$
  with $r=\f{2d}{\ga}$. Denote $q=p r'$. One can check that $q\geq 2$  is equivalent to the constraint $(p-2)d+\ga\geq 0$, which is one  of the requirements in   \eqref{50}.
  By Sobolev embedding and the Gagliardo-Nirenberg's inequalities, we have
  $$
\|u\|_{L^{pr'}}=  \|u\|_{L^{q}}\leq C_d \|u\|_{\dot{H}^{s}}
  \leq C_d
  \|u\|_{\dot{H}^\be}^{\f{s}{\be}} \|u\|_{L^2}^{1- \f{s}{\be}},
  $$
  where $s=d(\f{1}{2}-\f{1}{q})$, provided $s<\be$ (still to be verified under \eqref{50}).  In  turn, this yields
  \begin{equation}\label{eq.GN1}
     c_{d,\ga} \dpr{|\cdot|^{-\ga}*|u|^{p}}{|u|^{p}}\leq C_{d}  \|\naa  u\|_{L^2}^{\f{2 p s}{\be}} \|  u\|_{L^2}^{2p-\f{2 p s}{\be}},
  \end{equation}
  so we have
  $$
  c_{d,\ga} \dpr{|\cdot|^{-\ga}*|u|^{p}}{|u|^{p}}\leq C_{d, \la}  \|\naa  u\|_{L^2}^{\f{2 p s}{\be}}.
  $$
  Now, the right-hand side of the constraint \eqref{50} ensures exactly that $\f{2 p s}{\be}<2$, so in particular $s<\be$ (since $p >1$), which was required earlier.
  Hence, by Young's inequality
  $$
  E(u)\geq \f{1}{2} \|\naa  u\|^2- C_{d, \la}  \|\naa u\|_{L^2}^{\f{2 p s}{\be}}\geq M_{d,\la},
  $$
  which is the desired control from below of the cost functional $J$.
Introduce
  $$
 E_\la= \inf\limits_{\int u^2(x) dx = \la}  \f{1}{2} \|\naa  u\|_{L^2}^2 - \f{c_{d,\ga}}{2 p} \int_{\rd\times \rd }
  \f{u^{p}(x) u^{p}(y)}{|x-y|^\ga} dx dy,
  $$
  which we know from our previous arguments exists.

  Next, we discuss the existence and the other properties of the constrained minimizers.
  We work with a fixed $\la$, so we omit the superscript in $\phi^\la$.
 Take a minimizing sequence, say $u_n$, and
  $\lim_n E(u_n)= E_\la$.  We
  have by the Polya-Szeg\"o inequality, \eqref{PS}
  $$
  \|\naa u_n \|_{L^2(\rd)}\geq \| \naa u_n^*\|_{L^2(\rd)}.
  $$
  In addition, we have by the Riesz rearrangement inequality
  \begin{eqnarray*}
  \int_{\rd\times \rd} \f{|u_n(x)|^{p} |u_n(y)|^{p}}{|x-y|^\ga} dx dy &= & \dpr{|\cdot|^{-\ga}*u_n^{p}}{u_n^p}\leq \dpr{|\cdot|^{-\ga}*(u^*_n)^p}{(u^*_n)^p}
  = \\
  &= & \int_{\rd\times \rd} \f{(u_n^*)^p(x) (u_n^*)^p(y)}{|x-y|^\ga} dx dy.
  \end{eqnarray*}
  Combining the last two estimates tells us that $E(u_n)\geq E(u_n^*)$, while $\|u_n^*\|_{L^2}^2=\la$.
   Hence, $\lim_n  E(u^*_n)= E_\la$ and $u_n^*$ is uniformly bounded sequence in $H^1(\rd)$.

   Moreover, $u_n^*$ are now bell-shaped functions in the unit sphere of $L^2$, so they have a weakly convergent subsequence (denoted again $u_n^*$), converging weakly in $L^2$ to say $\phi$, a bell-shaped function. By the lower semi-continuity of the norm with respect to weak convergence, $\|\phi\|^2\leq \la$ and also  (note that $\naa u_n^* $ converges weakly to $\naa \phi$)
  \begin{equation}
  \label{vlad:10}
  \liminf \|\naa u_n^*]\|_{L^2}^2\geq \|\naa \phi \|_{L^2}^2.
  \end{equation}
 We also  have that for every $x: |x|>0$,
 $$
 \la=\int_{\rd} |u_n^*(y)|^2 dy\geq \int_{|y|<|x|} |u_n^*(y)|^2 dy\geq c_d |\cdot|^{d} |u_n^*(x)|^2,
 $$
  whence $|u_n^*(x)|\leq C_d |x|^{-d/2}$ for  every $ x \in \rd, x\neq 0$.

 It follows that  $\{u_n^*\}$ is a compact sequence in any $L^q, q>2$ (Relich-Kondrashov's), hence we can assume (after taking subsequences)  $\lim_n \|u_n^*- \phi\|_{L^q}=0$ for any $q>2$.
 As a consequence, we claim that
  \begin{equation}
  \label{vlad:20}
  \lim_n \int_{\rtwo} \f{(u_n^*)^p(x) (u_n^*)^p(y)}{|x-y|^\ga} dx dy = \int_{\rtwo} \f{\phi^p(x) \phi^p(y)}{|x-y|^\ga} dx dy.
  \end{equation}
  Indeed, denoting $K(u,v):= \int_{\rd\times \rd} \f{|u(x)|^p |v(y)|^p}{|x-y|^\ga} dx dy$, we have by triangle inequality and the Hardy-Littlewood-Sobolev inequalities displayed earlier
  \begin{eqnarray*}
  & &   |K(u_n^*,u_n^*)-K(\phi, \phi)|\leq |K(u_n^*,u_n^*)-K(\phi, u_n^*)|+|K(\phi,u_n^*)-K(\phi, \phi)|= \\
  &=& |K(u_n^*-\phi,u_n^*)|+|K(\phi,u_n^*-\phi)|\leq C \|u_n^*-\phi\|_{L^q}(\|u_n^*\|_{L^q}+\|\phi\|_{L^q}).
  \end{eqnarray*}
  for $q=p r'=\f{2d p}{2d-\ga}>2$. Clearly now $\lim_n K(u_n^*,u_n^*)=K(\phi, \phi)$.

  All in all, it follows that $E(\phi)\leq E_\la$. Let us now show that under the constraint \eqref{50}, we have that $E_\la<0$. To that end, take a test function, say $\vp:\int_{\rd} \vp^2(y) dy=\la$, $\ve:\ve<<1$  and set
  $u_\ve=\ve^{d/2} \vp(\ve x)$. Clearly $\|u_\ve\|_{L^2}^2=\la$, so it satisfies the constraint. On the other hand
  \begin{eqnarray*}
  E(u_\ve) &=& \f{1}{2} \|\naa u_\ve\|_{L^2}^2 - \f{c_{d,\ga}}{2 p} \int_{\rd\times \rd}
  \f{|u_\ve(x)|^{p} |u_\ve(y)|^{p}}{|x-y|^\ga} dx dy = \\
  &=& \ve^{2\be} \f{1}{2} \|\nabla \vp \|_{L^2}^2 - \ve^{(p-2) d+\ga} \f{c_{d,\ga}}{2 p}
  \int_{\rd\times \rd}
  \f{|\vp(x)|^{p+1} |\vp(y)|^{p+1}}{|x-y|^\ga} dx dy
  \end{eqnarray*}
  Clearly, since $(p-2) d+\ga<2 $, we have that for small enough $\ve$ the potential term $K(\vp, \vp)$ dominates and hence   $I_\la<0$.

  We are now ready to prove that $\phi$ is a minimizer.  We need to show that $\|\phi\|_{L^2}^2=\la$. Assume that $ \|\phi\|_{L^2}^2<\la$. Then, there is $\mu>1$, so that
  $\|\mu \phi\|_{L^2}^2=\la$. Hence
  \begin{eqnarray*}
  E_\la\leq E(\mu \phi) &=&
  \mu^2\left[\f{1}{2}\|\naa \phi \|^2 - \mu^{2p-2}\f{c_{d,\ga}}{2 p} \int_{\rd\times \rd} \f{\phi^{p}(x) \phi^{p}(y)}{|x-y|^\ga} dx dy\right]\leq \\
  &\leq & \mu^2\left[\f{1}{2}\|\naa \phi  \|^2 - \f{c_{d,\ga}}{2 p} \int_{\rd\times \rd } \f{\phi^{p}(x) \phi^{p}(y)}{|x-y|^\ga} dx dy\right]\leq \mu^2 E_\la,
  \end{eqnarray*}
  a contradiction, since $E_\la<0$. This means $\|\phi\|_{L^2}^2=\la$, but we will show now that  $E(\phi)\leq E_\la$. Indeed, by \eqref{vlad:10} and \eqref{vlad:20},
  $$
  E_\la=\lim_n E(u_n^*)\geq E(\phi).
  $$
  From here, we may conclude that $E(\phi)=E_\la$, otherwise $E(\phi)<E_\la$, a contradiction with the definition of
  $E_\la$.    Thus, $\phi$ is a minimizer.

  Note that in addition, this last equality implies\footnote{otherwise, one gets the impossible inequality $E(\phi)<E_\la$} $  \liminf \|\naa u_n^*]\|_{L^2} = \|\naa \phi \|_{L^2}$, which in addition to the weak  convergence
  $|\nabla|^\beta u_n^*\rightharpoonup |\nabla|^\beta \phi$ allows us to conclude  \\ $\lim_n \||\nabla|^\beta u_n^*  - |\nabla|^\beta \phi\|_{L^2}=0$. So, in the end, it turns out that the minimization sequence converges strongly to the minimizer $\phi$.

  \end{proof}
  Now that we have established the existence of the constrained minimizers, we proceed to our next result which concerns the Euler-Lagrange equation and explicit calculations of  various quantities associated with the energy functional $E(\phi)$.
  \subsection{The Euler-Lagrange equation and scaling relations} For convenience, we introduce the positive parameter
  $$
  \Ga=\Ga_{\ga, \be, d,p}:=2\be -\ga- d (p-2),
  $$
  which appears often in the subsequent formulas.
  \begin{theorem}
  \label{theo:20}
 Under the assumption of Theorem \ref{theo:10} for the parameters, a constrained minimizer $\phi_\la$ as minimizer of \eqref{30}   satisfies the Euler-Lagrange equation \eqref{20}. Moreover, there are the identities
 \begin{eqnarray}
 \label{55}
  \phi_\la(x) &=& \la^{\f{\Ga+(p-1)d}{2\Ga}} \phi(\la^{\f{p-1}{\Ga}} x), \\
 \label{60}
E_\la &=&  \la^{1+\f{2\be (p-1)}{\Ga}} E_1,\\
 \label{62}
J_\la=\|\naa  \phi_\la\|^2 &=& \f{2(\ga+d(p-2))}{\Ga}(-E_1)  \la^{1+\f{2\be (p-1)}{\Ga}}, \\
 \label{64}
 K= c_{d,\ga} \int_{\rd\times \rd} \f{\phi_\la^{p}(x) \phi_\la^{p}(y)}{|x-y|^\ga} dx dy & = & \f{4\be p}{\Ga} (-E_1) \la^{1+\f{2\be (p-1)}{\Ga}},\\
 \label{67}
 \om  &=& (1+\f{2\be(p-1)}{\Ga}) E_1 \la^{\f{2\be (p-1)}{\Ga}}.
 \end{eqnarray}
 Finally, there is the positivity of the (self-adjoint) linearized operator
 $$
  \cl f: =\dea f - \om f - p (c_{d,\ga} |\cdot|^{-\ga}*[\phi^{p-1} f])\phi^{p-1} -(p-1)  (c_{d,\ga} |\cdot|^{-\ga}*[\phi^{p}]) \phi^{p-2} f,
   $$
  on the co-dimension one subspace $\{\phi\}^\perp$. That is,
 \begin{equation}
 \label{69}
 \dpr{\cl h}{h}\geq 0,  h\in \{\phi\}^\perp.
 \end{equation}
  \end{theorem}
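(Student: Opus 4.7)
The idea is to use the fact that $\phi_\la$ is a constrained minimizer to obtain, in sequence, the Euler--Lagrange equation, the scaling and Pohozaev identities, and the second-variation inequality. The observation that makes the scaling clean for the Hartree-type problem is that one can choose exponents $a,b$ so that the single dilation $u\mapsto\la^{a} u(\la^{b}\cdot)$ simultaneously matches the $L^2$-constraint, the kinetic term, and the nonlinear term up to one and the same power of $\la$.

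\textbf{Steps 1--2 (Euler--Lagrange and scaling bijection).} Lagrange multipliers applied to the $C^1$ functional $E$ on the smooth codimension-one constraint $\|u\|_{L^2}^2=\la$ give $E'(\phi_\la)=\om_\la\phi_\la$; computing the Fr\'echet derivative
\[
E'(\phi)[h]=\dpr{\dea\phi - c_{d,\ga}(|\cdot|^{-\ga}*\phi^p)\phi^{p-1}}{h}
\]
identifies this with \eqref{20}, and pairing with $\phi_\la$ yields the testing identity $J_\la-K_\la=\om_\la\la$, where $J_\la:=\|\naa\phi_\la\|_{L^2}^2$ and $K_\la$ is the Hartree quantity in \eqref{64}. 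Next, for any $u$ with $\|u\|_{L^2}^2=1$ set $\tilde u(x):=\la^{a} u(\la^{b} x)$ with $b=(p-1)/\Ga$ and $a=(\Ga+d(p-1))/(2\Ga)$. A direct change of variables gives
\[
\|\tilde u\|_{L^2}^2=\la^{2a-bd}=\la,\qquad \|\naa\tilde u\|_{L^2}^2=\la^{2a+b(2\be-d)}\|\naa u\|_{L^2}^2,\qquad K(\tilde u)=\la^{2ap+b(\ga-2d)}K(u),
\]
and this choice of $(a,b)$ makes both nontrivial exponents equal $1+\tfrac{2\be(p-1)}{\Ga}$. Hence $u\mapsto \tilde u$ is an $E$-rescaling bijection between the two constraint spheres, so $E_\la=\la^{1+2\be(p-1)/\Ga}E_1$ and every minimizer has the form \eqref{55}, giving \eqref{55} and \eqref{60}.

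\textbf{Step 3 (Pohozaev and explicit $J_\la,K_\la,\om_\la$).} To isolate $J_\la,K_\la,\om_\la$ individually I apply a second scaling $v_\ve(x):=\phi_\la(\ve x)$, which does \emph{not} preserve the constraint, and differentiate the Lagrangian $\ve\mapsto E(v_\ve)-\tfrac{\om_\la}{2}\|v_\ve\|_{L^2}^2$ at $\ve=1$; criticality $E'(\phi_\la)=\om_\la\phi_\la$ makes this derivative vanish, producing the Pohozaev identity
\[
(2\be-d)J_\la+\tfrac{2d-\ga}{p}K_\la+\om_\la d\la=0.
\]
Combined with $J_\la-K_\la=\om_\la\la$ this is a $2\times 2$ linear system whose unique solution is $J_\la=\tfrac{2(\ga+d(p-2))}{\Ga}(-E_\la)$ and $K_\la=\tfrac{4\be p}{\Ga}(-E_\la)$; substituting $E_\la=E_1\la^{1+2\be(p-1)/\Ga}$ from Step 2 then produces \eqref{62} and \eqref{64}, and back-substituting into $J_\la-K_\la=\om_\la\la$ yields \eqref{67}.

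\textbf{Step 4 and main obstacle.} A direct second-order expansion of $E$ at $\phi_\la$ shows $E''(\phi_\la)[h,h]=\dpr{(\cl+\om)h}{h}$. For $h\in\{\phi_\la\}^\perp$ I parametrize the curve $u(\tau):=\cos\tau\,\phi_\la+\sin\tau\,\sqrt{\la}\,h/\|h\|$, which lies on the constraint sphere with $u'(0)=\sqrt{\la}\,h/\|h\|$ and $u''(0)=-\phi_\la$; the minimization $E(u(\tau))\geq E(\phi_\la)$ forces $\tfrac{d^2}{d\tau^2}E(u(\tau))|_{\tau=0}\geq 0$, and using $E'(\phi_\la)=\om\phi_\la$ this simplifies to exactly $\dpr{\cl h}{h}\geq 0$, which is \eqref{69}. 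The only genuinely delicate point is the rigorous Pohozaev calculation of Step 3, since $\phi_\la$ is a priori only an $H^\be$ minimizer: one must either invoke the additional regularity (bell-shapedness from Proposition \ref{prop:24} followed by standard elliptic regularity applied to \eqref{20}), or approximate by smooth compactly supported test functions and pass to the limit, in order to justify the interchange of derivative and integral and the vanishing of boundary terms.
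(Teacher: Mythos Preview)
Your proof is correct, and the overall architecture (Euler--Lagrange from first variation, scaling bijection for \eqref{55}--\eqref{60}, Pohozaev plus the testing identity for \eqref{62}--\eqref{67}, second variation for \eqref{69}) matches the paper's. The execution differs in two places worth noting.

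\textbf{Pohozaev.} You use the non-constraint-preserving dilation $v_\ve=\phi_\la(\ve\,\cdot)$ and differentiate the \emph{Lagrangian} $E(v_\ve)-\tfrac{\om_\la}{2}\|v_\ve\|^2$ at $\ve=1$, invoking criticality $E'(\phi_\la)=\om_\la\phi_\la$. The paper instead takes the $L^2$-preserving dilation $z_\mu(x)=\mu^{d/2}\phi(\mu x)$; since $z_\mu$ stays on the constraint sphere, minimality gives $E(z_\mu)\ge E(z_1)$ directly, so $\tfrac{d}{d\mu}E(z_\mu)|_{\mu=1}=0$ follows without any chain-rule appeal to $E'(\phi_\la)$. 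This yields immediately $\be J-\tfrac{\ga+d(p-2)}{2p}K=0$, which combined with $\tfrac12 J-\tfrac{1}{2p}K=E_1$ is the $2\times 2$ system. The payoff is that the paper's route completely sidesteps the regularity obstacle you flag: after a change of variables $E(z_\mu)$ is an explicit smooth function of $\mu$, and one never needs $x\cdot\nabla\phi_\la$ to be an admissible test direction. Your approach is valid too, but the justification you outline (elliptic regularity or approximation) is work that the paper's choice of scaling simply avoids.

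\textbf{Second variation.} You parametrize the constraint sphere by $u(\tau)=\cos\tau\,\phi_\la+\sin\tau\,\sqrt{\la}\,h/\|h\|$ and use $u''(0)=-\phi_\la$; the paper instead expands $g(\de)=E\bigl((\phi+\de h)/\|\phi+\de h\|\bigr)$ to second order in $\de$ for $h\perp\phi$, $\|h\|=1$. These are equivalent parametrizations of the same calculation and both land on $\dpr{\cl h}{h}\ge 0$.

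One minor point of phrasing: your ``$2\times2$ linear system'' in Step 3 really has three unknowns $J_\la,K_\la,\om_\la$; what you are actually doing is eliminating $\om_\la$ via the testing identity to recover the paper's Pohozaev relation, and then pairing that with $E_\la=\tfrac12 J_\la-\tfrac{1}{2p}K_\la$ (with $E_\la$ already known from Step 2) to solve for $J_\la,K_\la$.
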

  \begin{proof}
  Fix $\la$. By scaling , one sees that the solution $\phi_\la$ of \eqref{30} can be represented by the following formula
  $$
  \phi_\la(x)=\la^{\f{\Ga+(p-1)d}{2\Ga}} \phi_1(\la^{\f{p-1}{\Ga}} x),
  $$
  From here, a short computation shows that it suffices to prove the results for the case $\la=1$.

   So, fix $\la=1$.
 Let $\phi=\phi_1$ be a minimizer for \eqref{30}.
  For any $\de>0$, consider $u_\de=\phi+\de h$, with $h$ real-valued. We have that
  $$
  E\left(\f{u_\de}{\|u_\de\|}\right)\geq E_1.
  $$
  Note that
  $$
  \|u_\de\|=\sqrt{\|\phi\|^2+2\de \dpr{\phi}{h}+O(\de^2)}=1+ \de \dpr{\phi}{h} +O(\de^2).
  $$
   We have
 $$
  \f{1}{2} \f{\|\naa u_\de\|^2}{\|u_\de\|^2} =   \f{1}{2} J - \de  (-\dpr{\dea \phi}{h}+J \dpr{\phi}{h}  ) +O(\de^2)
  $$
  and
  \begin{eqnarray*}
 & &  -\f{c_{d,\ga}}{2 p \|u_\de\|^{2 p}} \int_{\rtwo} \f{u_\de^{p}(x) u_\de^{p}(y)}{|x-y|^\ga } dx dy = \\
 &=&  - \f{1}{2 p} K +   \de \left[ \dpr{\phi}{h} K - \dpr{(c_{d,\ga} |\cdot|^{-\ga}*\phi^{p})\phi^{p-1}}{h} \right]  +O(\de^2).
  \end{eqnarray*}
  Taking into account that
  \begin{equation}
  \label{65}
   \f{1}{2} J  -  \f{1}{2 p} K=E_1,
   \end{equation}
    we conclude
  $$
  \de \dpr{\dea  \phi - (c_{d,\ga} |\cdot|^{-\ga}*\phi^{p})\phi^{p-1}+(K - J) \phi}{h}+O(\de^2)\geq 0.
  $$
  Since this is true for all $\de\in \rone$ and for all test functions $h$, we conclude that $\phi$ satisfies
  $$
  \dea \phi - (c_{d,\ga} |\cdot|^{-\ga}*\phi^{p})\phi^{p-1}+(K - J) \phi=0,
  $$
  which is the Euler-Lagrange equation \eqref{20}, with a scalar $\om=J-K$.
   Finally, there is the Pohozaev's identity, which we derive in the following way. Set
  $
  z_\mu(x)=\mu^{d/2} \phi(\mu x).
  $
  Clearly, since $\int_{\rd} z_\mu^2(x) dx=\int\phi^2(x) dx=1$, $z_\mu$  satisfies the constraint of \eqref{30}. Now
  \begin{eqnarray*}
  E(z_\mu) &=& \f{\mu^{2\be}}{2}  J - \mu^{\ga+d (p-2)} \f{1}{2 p} K.
  \end{eqnarray*}
  Since the scalar valued function $\mu\to E(z_\mu)$ achieves its minimum at $\mu=1$, we must have $\f{d E(z_\mu)}{d\mu}|_{\mu=1}=0$. This relation yields the Pohozaev's identity
  \begin{equation}
  \label{68}
  \be J - \f{\ga+d(p-2)}{2 p} K=0.
  \end{equation}
  Combining \eqref{65} and \eqref{68}, we obtain the formulas
  \begin{eqnarray*}
  K &=& \f{4\be p}{\Ga} (-E_1) \\
  J &=& \f{2(\ga+d(p-2))}{\Ga}(-E_1)\\
  \om &=& J-K= 2\f{2\be p-\ga-d(p-2)}{\Ga} E_1=(1+\f{2\be(p-1)}{\Ga}) E_1.
  \end{eqnarray*}
  Thus, we arrive at the statements of    \eqref{62}, \eqref{64}, \eqref{67}. Clearly, $K>0, J>0$, while $\om<0$, since $E_1<0$.

  We now establish the coercivity of $\cl$ on the co-dimension one subspace  $\{\phi\}^\perp$.  To that end, note that for every test function $h$,
  the function
  $$
  g(\de):= E\left(\f{\phi+\de h}{\|\phi+\de h\|}\right)
  $$
  has a minimum at $\de=0$. In fact, the Euler-Lagrange equation \eqref{20} is nothing but a rephrased version of the necessary condition for a minimum $g'(0)=0$.  Given that $g$ achieves its minimum at $\de=0$, one has  a second necessary condition for minimum, namely $g''(0)\geq 0$. We will exploit this fact to our advantage in order to deduce \eqref{69}.  In order to simplify the computations (and to reflect the fact that the coercivity of $\cl$ is only over $\{\phi\}^\perp$ anyway), we take $h: \dpr{h}{\phi}=0, \|h\|=1$. Note that under this restriction
  $$
  \|\phi+\de h\|=(1+\de^2)^{1/2}=1+\f{\de^2}{2} +O(\de^3).
  $$
  Next, taking into account that $\phi$ satisfies \eqref{20}, we write
    \begin{eqnarray*}
  g(\de) &=&   \f{1}{2} \f{\|\naa (\phi+\de h)\|^2}{\|\phi+\de h \|^2}   -\f{c_{d,\ga}}{2 p} \int_{\rtwo} \f{(\phi+\de h)^{p}(x)(\phi+\de h)^{p}(y)}{|x-y|^\ga \|\phi+\de h\|^{2 p}} dx dy=\\
  &=& g(0)+\f{\de^2}{2}\left[ \dpr{(\dea +K-J)h}{h} - p \dpr{(c_{d,\ga} |\cdot|^{-\ga}*[\phi^{p-1} h]\phi^{p-1}}{ h} \right]-\\
  &-& (p-1) \dpr{(c_{d,\ga} |\cdot|^{-\ga}*\phi^{p})\phi^{p-2} h}{h}  +  o(\de^2).
  \end{eqnarray*}
  Recall that $\om=J-K$. Since $g(\de)\geq g(0)$ for all small enough $\de$,  it follows that the operator $\cl$ defined by
  $$
  \cl f= \dea f - \om f - p (c_{d,\ga} |\cdot|^{-\ga}*\phi^{p-1} f)\phi^{p-1} -(p-1) (c_{d,\ga} |\cdot|^{-\ga}*[\phi^{p}]) \phi^{p-2} f
  $$
  satisfies $\dpr{\cl h}{h}\geq 0$, which is exactly \eqref{69}.
  \end{proof}
    \subsection{The linearized problem and  spectral properties of the self-adjoint part} We impose  the ansatz\footnote{We suppress the super index $\phi_\la$ in what follows, but we would like to keep $\phi_\la$ dependent upon the parameter $\la$}   $u=\phi_\la+\epsilon v$, where $v$ is necessarily complex valued field.  We have
  \begin{eqnarray*}
 & &  (|\cdot|^{-\ga}*|u|^p) |u|^{p-2} u =  (|\cdot|^{-\ga}*|\phi +\epsilon v|^p) |\phi+\epsilon v|^{p-2} (\phi+\epsilon v)= \\
  &=& (|\cdot|^{-\ga}*\phi^p) \phi^{p-1}+\epsilon[p(|\cdot|^{-\ga}*[\phi^{p-1}\Re v]) \phi^{p-1}+  (|\cdot|^{-\ga}*\phi^p) [(p-2) \phi^{p-2}\Re v+\phi^{p-2} v]]+\\
  & & + o(\epsilon).
  \end{eqnarray*}
Setting $u=e^{i \om t}[\phi +\epsilon v]=e^{i \om t}[\phi+\epsilon(\Re v+i \Im v)]$ in \eqref{10} and ignoring $o(\epsilon)$, we obtain the following linearized system
 $$
  \left|
  \begin{array}{l}
  -\p_t v_2 +\dea v_1-\om v_1 - p (c_{d,\ga} |\cdot|^{-\ga}*[\phi^{p-1} v_1])\phi^{p-1}-(p-1)(c_{d,\ga} |\cdot|^{-\ga}*\phi^{p}) \phi^{p-2} v_1 =0 \\
  \p_t v_1+\dea v_2-\om v_2 - (c_{d,\ga} |\cdot|^{-\ga}*\phi^{p})\phi^{p-2} v_2=0
  \end{array}
  \right.
  $$
  for $v_1=\Re v, v_2=\Im v$. As is customary, we adopt the notation
  \begin{eqnarray*}
  L_+ &=& \cl=\dea  - \om  - p (c_{d,\ga} |\cdot|^{-\ga}*\phi^{p-1} [\cdot])\phi^{p-1} -(p-1) (c_{d,\ga} |\cdot|^{-\ga}*[\phi^{p}]) \phi^{p-2}, \\
  L_- &=&  \dea -\om  - (c_{d,\ga} |\cdot|^{-\ga}*\phi^{p})\phi^{p-2}, \\
  \cj &=& \left(\begin{array}{cc} 0 & -1 \\ 1 & 0 \end{array} \right), L=\left(\begin{array}{cc} L_+ & 0 \\ 0 & L_-\end{array}\right).
  \end{eqnarray*}
  so that we can rewrite the eigenvalue problem \eqref{a:10} in the Hamiltonian form
  \begin{equation}
  \label{a:20}
  \vec{v}_t= \cj L \vec{v}.
  \end{equation}
  We will show that $L$ is a self-adjoint operator, at least for $p>2$.  Indeed, one can apply the KLMN theorem (see Theorem X.17 in \cite{ReSi}) for the operators
  $$ L_+ = \dea  - \om  -  p c_{d,\ga} V_1 - p(p-1) c_{d,\ga}V_2, \ , \ L_- = \dea -\om  - c_{d,\ga} V_2,$$
  where
  $$ V_1(f) = (|\cdot|^{-\ga}*\phi^{p-1} [f])\phi^{p-1}, \ \ V_2(f) =  (|\cdot|^{-\ga}*\phi^{p})\phi^{p-2} f. $$
  The check of the assumptions of the KLMN theorem follow from the simple Sobolev estimate
    $$
    \left|\langle |\cdot|^{-\ga}*g_1, g_2\rangle_{L^2} \right| = c  \left|\langle (-\Delta)^{-\alpha/2} g_1, (-\Delta)^{-\alpha/2} g_2\rangle_{L^2} \right| \leq C \|g_1\|_{L^{2d/(d+\alpha)}} \|g_2\|_{L^{2d/(d+\alpha)}},
    $$
    applied for $g_1=g_2= \phi^{p-1} f.$ In this way we find\footnote{recall that $\phi$ is a bell - shaped function}
    $$\left| \langle V_1(f),f \rangle_{L^2} \right| \leq C \|f\|_{L^2}. $$
    For the operator $V_2$ we observe that $ |\cdot|^{-\ga}*\phi^{p} \in L^\infty$, so we  find
        $$\left| \langle V_2(f),f \rangle_{L^2} \right| \leq C \|f\|_{L^2}. $$
We are in position to conclude that $L_\pm$ are self - adjoint operators, whence $L$ is self-adjoint as well.
      On the other hand,  $J$ is clearly skew-symmetric. By Weyl's criterion, both operators $L_\pm$,  have absolutely continuous spectrum, which fills the interval $[-\om, \infty)$, which verifies the spectral gap condition at zero, since $\om<0$ by virtue of Theorem \ref{theo:20}.
  In addition, we have verified in Theorem \ref{theo:20} that $L_+$ has at most  one negative eigenvalue. We have the following  lemma, regarding the spectral properties of $L$.
  \begin{lemma}
  \label{le:50}
 For $p>2$,  the self-adjoint operator $L_+=\cl$ has exactly one negative eigenvalue, while $L_-\geq 0$.
  \end{lemma}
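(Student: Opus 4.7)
The plan is to treat the two operators separately, reducing everything to facts already in hand: the Euler--Lagrange equation \eqref{20}, the coercivity \eqref{69}, and standard positivity/min-max tools.

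For $L_-$, I would first verify by direct substitution that $L_-\phi=0$. Indeed, dividing \eqref{20} by $\phi$ (which is legitimate since $\phi>0$ as a bell-shaped minimizer, hence strictly positive on $\rd$ by the ground-state argument used in Proposition \ref{prop:24}) yields
\[
(-\De)^\be\phi - \om\phi - c_{d,\ga}(|\cdot|^{-\ga}*\phi^p)\phi^{p-1}=0,
\]
which is precisely $L_-\phi=0$. Since $p>2$, the potential $V_2 = c_{d,\ga}(|\cdot|^{-\ga}*\phi^p)\phi^{p-2}$ is bounded (both factors being bounded by the regularity of the minimizer), so $L_-$ is a genuine fractional Schr\"odinger operator with bounded potential. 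The heat semigroup $e^{-t(-\De)^\be}$ is positivity-improving, and the perturbed semigroup $e^{-tL_-}$ inherits this by the Trotter product formula. A Perron--Frobenius argument then gives that the bottom of the spectrum of $L_-$ is simple and attained by a strictly positive eigenfunction. Since $\phi>0$ already satisfies $L_-\phi=0$, this bottom must equal $0$, hence $L_-\geq 0$.

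For $L_+$, the upper bound on the number of negative eigenvalues comes directly from \eqref{69}: since $\dpr{L_+h}{h}\geq 0$ on the codimension-one subspace $\{\phi\}^\perp$, the min-max principle forces $L_+$ to have at most one negative eigenvalue. For the lower bound, I would compute $\dpr{L_+\phi}{\phi}$ explicitly. Using \eqref{20} to substitute $(-\De)^\be\phi-\om\phi = c_{d,\ga}(|\cdot|^{-\ga}*\phi^p)\phi^{p-1}$, the definition of $L_+$ gives
\[
L_+\phi = \bigl(1 - p - (p-1)\bigr) c_{d,\ga}(|\cdot|^{-\ga}*\phi^p)\phi^{p-1} = -2(p-1)\, c_{d,\ga}(|\cdot|^{-\ga}*\phi^p)\phi^{p-1},
\]
so $\dpr{L_+\phi}{\phi} = -2(p-1) K < 0$ since $K>0$ by \eqref{64} and $p>1$. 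Thus $L_+$ has at least one negative eigenvalue, and combined with the upper bound, exactly one.

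The main technical point is the Perron--Frobenius step for $L_-$: one must verify that $e^{-tL_-}$ is positivity-improving so that the ground state is unique and positive, which hinges on the well-known positivity-improving property of $e^{-t(-\De)^\be}$ for $\be\in(0,1]$ and on the boundedness of the perturbing potential (guaranteed by $p>2$). The remaining ingredients are routine bookkeeping.
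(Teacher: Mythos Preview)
Your treatment of $L_+$ is essentially identical to the paper's: compute $L_+\phi=-2(p-1)c_{d,\ga}(|\cdot|^{-\ga}*\phi^p)\phi^{p-1}$ from the Euler--Lagrange equation to get a negative direction, and invoke \eqref{69} with min--max for the upper bound. (One cosmetic remark: there is no need to ``divide \eqref{20} by $\phi$''; equation \eqref{20} \emph{is} the statement $L_-\phi=0$.)

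For $L_-$ the two arguments genuinely diverge. The paper stays within the variational toolbox already built up in Section~\ref{sec:prelim}: it assumes a negative eigenvalue exists, realizes the bottom eigenfunction as the minimizer of $\dpr{L_-\psi}{\psi}$ over the unit sphere, and then uses the fractional Polya--Szeg\"o inequality (Proposition~\ref{prop:10}) together with \eqref{RR} applied to the bell-shaped potential $V(x)=c_{d,\ga}(|\cdot|^{-\ga}*\phi^p)\phi^{p-2}$ to replace the minimizer by its rearrangement, forcing the ground state to be bell-shaped and hence positive; the contradiction with $\phi>0$ at eigenvalue $0$ then follows by orthogonality. Your route instead imports the Perron--Frobenius machinery for positivity-improving semigroups: $e^{-t(-\De)^\be}$ has a strictly positive kernel (by subordination to the heat semigroup for $\be<1$), and since $V_2$ is bounded multiplication for $p>2$, Trotter (or a Feynman--Kac representation for the $2\be$-stable process) gives that $e^{-tL_-}$ is positivity-improving; since $0$ is an eigenvalue below the essential spectrum $[-\om,\infty)$ with a nonnegative eigenfunction $\phi$, it must be the bottom. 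Both arguments ultimately derive the same orthogonality contradiction; the paper's version is self-contained relative to the rearrangement tools already proved, while yours is shorter once one is willing to cite the semigroup theory (e.g.\ Reed--Simon~IV, \S XIII.12) as a black box.
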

As an immediate consequence of Lemma \ref{le:50}, the matrix operator $L$ has exactly one negative eigenvalue.
  \begin{proof}(Lemma \ref{le:50})
  Regarding $\cl$, we only need to verify that it does indeed have a negative eigenvalue. This is easily seen by testing the quantity $\dpr{\cl \phi}{\phi}$. Indeed, taking into account the Euler-Lagrange equation \eqref{20}, we compute
  \begin{eqnarray*}
  \cl \phi &=& \dea  \phi - \om \phi   - p (c_{d,\ga} |\cdot|^{-\ga}*\phi^{p})\phi^{p-1} -(p-1) (c_{d,\ga} |\cdot|^{-\ga}*[\phi^{p}]) \phi^{p-1} = \\
  &=&  -(2p-2)(c_{d,\ga} |\cdot|^{-\ga}*[\phi^{p}]) \phi^{p-1}.
  \end{eqnarray*}
  Thus,
  $$
  \dpr{\cl \phi}{\phi}=-(2p-2)c_{d,\ga} \int_{\rd} (|\cdot|^{-\ga}*[\phi^{p}]) \phi^{p}=-(2p-2)c_{d,\ga}  \int_{\rd\times \rd} \f{\phi^{p}(x) \phi^{p}(y)}{|x-y|^\ga} dx dy<0.
  $$
  Next, we wish to show that $L_-$ is a non-negative operator. Note that the Euler-Lagrange equation \eqref{20} is nothing but $L_-[\phi]=0$, where $\phi>0$. In particular zero is an eigenvalue for $L_-$ and it remains to show that it is at the bottom of its spectrum. Assume that this is not the case, hence $L_-$ has a negative eigenvalue, say $-\si^2$ and assume, without loss of generality that it is the smallest such eigenvalue.  In particular,
  \begin{equation}
  \label{a:30}
 -\si^2=\inf\limits_{\|\psi\|=1} \dpr{L_- \psi}{\psi}.
  \end{equation}
  The corresponding eigenfunction, say $\vp$ can be constructed as a minimizer of the minimization problem \eqref{a:30}. More precisely, upon introducing   the bell-shaped function \\ $V(x):=(c_{d,\ga} |\cdot|^{-\ga}*\phi^{p})\phi^{p-2}$, we have
  \begin{equation}
  \label{a:60}
  \left|
  \begin{array}{l}
  \dpr{L_- \psi}{\psi}= \|\naa \psi\|^2 -   \int_{\rd}  V(x) \psi^2(x) dx\to min \\
  \int_{\rd} \psi^2(x) dx=1.
  \end{array}
  \right.
  \end{equation}
  We will now show that the eigenfunction $\vp$ satisfies $\vp=\vp^*$ and as such is a positive function. To that end,  by Proposition \ref{prop:10}, we have
  $$
  \|\naa \psi\|^2\geq \|\naa [\psi^*]\|^2
  $$
  Next, applying \eqref{RR} and observing that $(h^2)^*=(h^*)^2$, we obtain
  \begin{equation}
  \label{a:50}
  \int_{\rd}  V(x) (\psi(x))^2 dx \leq \int_{\rd}  V(x) (\psi^*(x))^2  dx,
  \end{equation}
while the constraint $1=\int_{\rd} \psi^2(x) dx = \int_{\rd} (\psi^*(x))^2 dx$ remains satisfied. Thus, $\dpr{L_-\psi}{\psi}\geq \dpr{L_-\psi^*}{\psi^*}$.
It follows that the solution $\vp$ of \eqref{a:60}, which must exists, is bell-shaped and in particular $\vp>0$. But if such eigenfunction corresponds to a negative eigenvalue $-\si^2$, then it must be perpendicular to the eigenfunction $\phi$ corresponding to eigenvalue zero. However, both $\phi>0, \vp>0$, a contradiction. It follows that $L_-\geq 0$.

  \end{proof}
  At this point, we are essentially ready to consider the stability of these waves, more precisely the eigenvalue problem \eqref{a:20}. We will postpone  these  considerations to Section \ref{sec:4.6}. This is done in the interest of presenting an unified approach for the classical case of MVS waves and then for the fractional waves.  The approach for the fractional case turns out to be pretty similar, we outline the details in Section \ref{sec:4.6}.

  \section{Classification of the stability of the ground states for the Hartree and Klein-Gordon-Hartree models: proof of Theorems \ref{theo:60} and \ref{theo:50}}
  \label{sec:4}
We start with the proof of Theorem \ref{theo:60}. Henceforth,  the assumptions made in Theorem \ref{theo:60} are in force.  Recall
  $
  \Ga=2-\ga-(p-2)d.
  $
Consider the linearization of the solutions of the time-dependent Hartree model \eqref{t:10} around the ground states constructed in Theorem \ref{vs:10}
  and the Klein-Gordon-Hartree model \eqref{410}, around the ground state constructed in \eqref{420}.

\subsection{The linearized problem for the Hartree model \eqref{10}}
As before, we take the ansatz\footnote{We suppress the super index $\phi^\la$ in what follows, but we would like to keep $\phi^\la$ dependent upon the parameter $\la$}
  $$
  u=e^{-i t}[\vp +\epsilon v]=e^{-i t}[\vp +\epsilon(\Re v+i \Im v)]=e^{ - i  t}[\vp+\epsilon(v_1+i v_2)]
  $$
   in \eqref{10} and ignoring $O(\epsilon^2)$, leads us to the  following linearized system
  \begin{equation}
  \label{a:10}
  \left|
  \begin{array}{l}
  -\p_t v_2 -\De v_1+ v_1 - p I_\al[\vp^{p-1} v_1] \vp^{p-1}-(p-1) I_\al[\vp^{p}] \vp^{p-2} v_1 =0 \\
  \p_t v_1 - \De  v_2 + v_2 - I_\al[\vp^{p}] \vp^{p-2} v_2=0
  \end{array}
  \right.
  \end{equation}
  As is customary, we adopt the notation
  \begin{eqnarray*}
  L_+ &=& -\De +1  - pI_\al[\vp^{p-1} [\cdot]] \vp^{p-1} -(p-1) I_\al[\vp^{p}]  \vp^{p-2} \\
  L_- &=& -\De  +1  - I_\al[\vp^{p}] \vp^{p-2} \\
  \cj &=& \left(\begin{array}{cc} 0 & -1 \\ 1 & 0 \end{array} \right), L=\left(\begin{array}{cc} L_+ & 0 \\ 0 & L_-\end{array}\right).
  \end{eqnarray*}
  so that we can rewrite the eigenvalue problem \eqref{a:10} in the Hamiltonian form
 $$
  \vec{v}_t= \cj L \vec{v}
 $$
 Note that $\cj$ is clearly skew-symmetric. Next we  derive the linearized problem for the Klein-Gordon-Hartree model \eqref{410}.

  \subsection{Linearized problem for the Klein-Gordon-Hartree model \eqref{410}}
       As in the \\ Schr\"odinger case, take
       $$
       u=e^{i \om t}[\Psi+\epsilon v]=e^{i \om t}[\Psi+\epsilon(\Re v+i \Im v)]=e^{i \om t}[\Psi+\epsilon(v_1+i v_2)]
       $$
       and plug this in \eqref{410}. After ignoring $O(\epsilon^2)$ terms and taking real and imaginary parts, we arrive at
       \begin{equation}
       \label{b:15}
       \left|\begin{array}{l}
       \p_{tt} v_1 -2\om \p_t v_2+(1-\om^2) v_1 -\De v_1 -  p I_\al[\Psi^{p-1} v_1] \Psi^{p-1}-(p-1) I_\al[\Psi^{p}] \Psi^{p-2} v_1 =0,       \\
       \p_{tt} v_2 +2\om \p_t v_1 +(1-\om^2) v_2 -\De v_2 - I_\al[\Psi^{p}] \Psi^{p-2} v_2=0.
       \end{array}
       \right.
       \end{equation}
     In order to bring the eigenvalue problem \eqref{b:15} to a form similar to \eqref{a:10}, recall \eqref{eq:psi}. In accordance with that,
     we rescale the variables as follows $v_j(t,x)=e^{\la t \sqrt{1-\om^2}} V_j(x\sqrt{1-\om^2}), j=1,2$, so that the eigenvalue problem \eqref{b:15} is transformed into the standard form
      \begin{eqnarray*}
     & &  \la^2 V_1 -2\la \f{\om}{\sqrt{1-\om^2}}  V_2+L_+[V_1] =0,     \\
      & &  \la^2 V_2 +2\la \f{\om}{\sqrt{1-\om^2}} V_1 +  L_-[V_2]=0.
       \end{eqnarray*}
 Introducing the skew-symmetric matrix $\cj_\om:=\left(\begin{array}{cc} 0 &  - 2 \f{\om}{\sqrt{1-\om^2}} \\ 2 \f{\om}{\sqrt{1-\om^2}}  & 0 \end{array}\right)$, we can rewrite the relevant eigenvalue problem in the compact form
 \begin{equation}
 \label{b:20}
 \left(\begin{array}{cc} 0 & {\mathbf I_2} \\ - {\mathbf I_2} & -\cj_\om  \end{array}\right)  \left(\begin{array}{cc} L & 0 \\  0 &  {\mathbf I_2} \end{array}\right) \vec{W}=\la \vec{W}, \ \
 \vec{W}=  \left(\begin{array}{c}  \left(\begin{array}{c} V_1\\ V_2 \end{array}\right)  \\  \la \left(\begin{array}{c} V_1\\ V_2 \end{array}\right)   \end{array}\right).
 \end{equation}

\subsection{Spectral  information about the operators    $L_\pm$}
\label{sec:4.11}
Here, we shall need to summarize the results in \cite{MVS1} about the properties of the MVS solutions $\vp$ of \eqref{t:20}.
\begin{lemma}(Theorem 4 and Lemma 6.7, \cite{MVS1} )
\label{le:2}
The MVS solution $\vp$ of \eqref{t:20} satisfy
\begin{equation}
\label{n:10}
\lim_{|x|\to \infty} \f{I_\al[\vp^p]}{I_\al(x)}=\int_{\rd} \vp^p.
\end{equation}
For $p\geq 2$, $\vp$ has exponential decay at $\pm \infty$, while for $p<2$, there is the relation
\begin{equation}
\label{n:20}
 \vp(x) = \f{1}{|x|^{\f{d-\al}{2-p}}}\left(\f{\Ga(\f{d-\al}{2})}{\Ga(\f{\al}{2}) \pi^{d/2} 2^\al} \int_{\rd}  \vp^p\right)^{\f{1}{2-p}} + O\left(\f{1}{|x|^{\f{d-\al}{2-p}+2}}\right)
 \end{equation}
 for large $|x|$.
 In addition,
  \begin{eqnarray}
  \label{210}
 J[\vp] &:= &\int_{\rd} |\nabla \vp(x)|^2 dx = \f{d(p-2)+\ga}{2d-\ga-p(d-2)} \|\vp\|_{L^2}^2,   \\
  \label{220}
K[\vp] &:= & \dpr{I_\al[\vp^p]}{\vp^p}=  c_{d,\ga} \int_{\rd\times \rd} \f{\vp^{p}(x) \vp^{p}(y)}{|x-y|^\ga} dx dy=\f{2p}{2d-\ga-p(d-2)} \|\vp\|_{L^2}^2, \\
  \label{230}
  E[\vp] &=& \f{1}{2} J - \f{1}{2p} K= - \f{\Ga}{2(2d-\ga-p(d-2))} \|\vp\|_{L^2}^2.
  \end{eqnarray}
  {\bf Note:} By the assumptions in the existence theorem $2d-\ga-p(d-2)>0$, so   $sgn(E)=-sgn(\Ga)$.
\end{lemma}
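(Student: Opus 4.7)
The lemma splits naturally into two independent claims: the pointwise asymptotics \eqref{n:10}--\eqref{n:20}, and the scalar identities \eqref{210}--\eqref{230}. For the first, I would simply cite \cite{MVS1}: \eqref{n:10} is a dominated-convergence application once $\vp^p\in L^1(\rd)$ is known, and the exponential/algebraic dichotomy for the pointwise tail of $\vp$ across $p=2$ is the content of Theorem 4 and Lemma 6.7 of \cite{MVS1}. Heuristically, for $p\geq 2$ the nonlocal coupling $I_\al[\vp^p]\vp^{p-2}$ decays strictly faster than the zeroth-order mass term in \eqref{t:20}, so the equation is asymptotically linear with positive mass and yields exponential decay; for $p<2$ the nonlocal coupling is of the same order as the linear term, and matching leading-order behavior in \eqref{t:20} delivers the algebraic tail \eqref{n:20}.

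The identities \eqref{210}--\eqref{230} come essentially for free once two standard integral relations are in hand. First, multiply \eqref{t:20} by $\vp$ and integrate; since $\vp\in H^2(\rd)\cap L^\infty(\rd)$ by Theorem \ref{vs:10} and $\dpr{I_\al[\vp^p]}{\vp^p}<\infty$ by Hardy--Littlewood--Sobolev, all terms are legitimate, producing the Nehari identity
\begin{equation*}
J[\vp] + \|\vp\|_{L^2}^2 = K[\vp].
\end{equation*}
Second, recall the Pohozaev identity \eqref{pohoz} from Theorem \ref{vs:10},
\begin{equation*}
(d-2)\, J[\vp] + d\,\|\vp\|_{L^2}^2 = \frac{2d-\ga}{p}\, K[\vp].
\end{equation*}
View this pair as a linear system in the unknowns $(J[\vp], K[\vp])$ with parameter $\|\vp\|_{L^2}^2$; its determinant $(2d-\ga-p(d-2))/p$ is strictly positive throughout the running hypothesis $\tfrac{d-2}{d+\al}<\tfrac{1}{p}$, so the system is uniquely invertible, and solving it reads off \eqref{210} and \eqref{220}. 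Substituting these into $E[\vp]=\tfrac{1}{2}J[\vp]-\tfrac{1}{2p}K[\vp]$ and simplifying the numerator via $\tfrac{1}{2}[d(p-2)+\ga] - 1 = -\tfrac{\Ga}{2}$ yields \eqref{230}, and the sign remark $\textup{sgn}(E[\vp])=-\textup{sgn}(\Ga)$ follows at once from positivity of the common denominator.

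The principal difficulty is really just understanding the cited decay results; once \eqref{n:10}--\eqref{n:20} are accepted, the derivation of \eqref{210}--\eqref{230} is a two-equation linear-algebra exercise. No obstacle arises from the non-smoothness of $z\mapsto z^{p-2}$ in the regime $1<p<2$: the only combination of the nonlinearity that appears after testing against $\vp$ is $I_\al[\vp^p]\vp^{p-2}\cdot\vp = I_\al[\vp^p]\vp^{p-1}$, which never requires evaluating $\vp^{p-2}$ at zeros of $\vp$; and the dilation argument producing Pohozaev has already been carried out in the statement of Theorem \ref{vs:10} and can be consumed off the shelf.
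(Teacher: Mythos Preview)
Your proposal is correct and follows essentially the same route as the paper: the asymptotics \eqref{n:10}--\eqref{n:20} are simply attributed to Theorem~4 and Lemma~6.7 of \cite{MVS1}, and the scalar identities \eqref{210}--\eqref{230} are derived by combining the Pohozaev identity \eqref{pohoz} with the Nehari relation $J[\vp]+\|\vp\|_{L^2}^2=K[\vp]$ obtained by testing \eqref{t:20} against $\vp$, then solving the resulting $2\times 2$ linear system. Your write-up is in fact slightly more detailed than the paper's (e.g., you check non-degeneracy of the system and comment on the $p<2$ issue), but the substance is identical.
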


  \begin{proof}
  The formula \eqref{n:10} appears in Theorem 4 in \cite{MVS1}. The formula \eqref{n:20} is a combination of the last statement of Theorem 4 and the final remark in the proof of Lemma 6.7. The formulas \eqref{210}, \eqref{220} and \eqref{230} are just an elementary consequence of the Pohozaev's identity \eqref{pohoz}, together with the relation
$\int_{\rd} [|\nabla u(x)|^2+|u(x)|^2] dx=\dpr{I_\al[|u|^p]}{|u|^p}$, which follows from \eqref{t:20} by taking dot product with $\vp$.
\end{proof}

 In addition to \eqref{n:10} and \eqref{n:20}, we will need more precise information on the behavior of $I_\al[\vp^p]$, for the case $p<2$. This is provided in the following lemma.
\begin{lemma}
Let $p\in  (2-\ga/d, 2)$. Then,
\label{le:vlado}
\begin{equation}
\label{n:30}
\f{I_\al[\vp^p]}{I_\al(x)}=[\int_{\rd} \vp^p+O(|x|^{-\min(2,\alpha)})]
\end{equation}
for large $|x|$.
\end{lemma}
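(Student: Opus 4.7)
The plan is to translate the claim into an estimate on the tail of the convolution integral and then control near and far contributions separately, exploiting the radial symmetry of $\vp$ and the explicit decay of $\vp$ provided by \eqref{n:20}. Write
$$
\frac{I_\al[\vp^p](x)}{I_\al(x)}-\int_{\rd}\vp^p \;=\; |x|^{d-\al}\int_{\rd}\vp^p(y)\Bigl[|x-y|^{-(d-\al)}-|x|^{-(d-\al)}\Bigr]\,dy,
$$
so the claim reduces to showing that the integral above is $O(|x|^{-(d-\al)-\min(2,\al)})$ for large $|x|$. I split the domain of integration into the near region $A=\{|y|\leq |x|/2\}$ and the far region $B=\{|y|>|x|/2\}$.

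For the near region $A$, I Taylor expand $g(y):=|x-y|^{-(d-\al)}$ around $y=0$ to second order. The zeroth-order term $|x|^{-(d-\al)}$ cancels by construction. The first-order term is $(d-\al)|x|^{-(d-\al)-2}(x\cdot y)$; because $\vp^p$ is radial and $A$ is symmetric under $y\mapsto -y$, its integral against $\vp^p$ vanishes exactly. The second-order remainder obeys the elementary bound $|r_2(y)|\lesssim |y|^2/|x|^{d-\al+2}$ uniformly on $A$. If $\int_{\rd}|y|^2\vp^p(y)\,dy<\infty$, its contribution is simply $O(|x|^{-(d-\al)-2})$. If that integral diverges, I invoke \eqref{n:20}, which yields $\vp^p(y)\lesssim |y|^{-p(d-\al)/(2-p)}$ for large $|y|$, hence
$$
\int_{|y|\leq R}|y|^2\vp^p(y)\,dy \;\lesssim\; 1+R^{d+2-p(d-\al)/(2-p)}.
$$
Setting $R=|x|/2$ gives a contribution of order $|x|^{\al-p(d-\al)/(2-p)}$. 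The assumption $p>2-\ga/d=1+\al/d$ is equivalent to $p(d-\al)/(2-p)>d+\al$ (short algebra), so this is bounded by $|x|^{-d}=|x|^{-(d-\al)-\al}$. In either case the near contribution is $O(|x|^{-(d-\al)-\min(2,\al)})$.

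For the far region $B$, I estimate both pieces of the bracket directly, using $\vp^p(y)\lesssim |y|^{-p(d-\al)/(2-p)}$. The piece $|x|^{-(d-\al)}\int_{B}\vp^p\,dy$ is bounded, after the substitution $r=|y|$, by $|x|^{-(d-\al)}\cdot |x|^{d-p(d-\al)/(2-p)}$, since $p(d-\al)/(2-p)>d+\al>d$ ensures integrability at infinity. For $\int_{B}\vp^p(y)|x-y|^{-(d-\al)}\,dy$ I further split $B$ by $|y-x|\gtrless |x|/4$. On $\{|y-x|\geq |x|/4\}\cap B$ the kernel is $\lesssim |x|^{-(d-\al)}$, so this piece is controlled just as above. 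On $\{|y-x|<|x|/4\}$ one has $|y|\geq 3|x|/4$, so $\vp^p(y)\lesssim |x|^{-p(d-\al)/(2-p)}$, and $\int_{|y-x|<|x|/4}|x-y|^{-(d-\al)}\,dy=c|x|^\al$; multiplying gives again $|x|^{\al-p(d-\al)/(2-p)}$. All three contributions are of order $|x|^{\al-p(d-\al)/(2-p)}\leq |x|^{-(d-\al)-\al}$, where the last inequality is the same algebraic consequence of $p>1+\al/d$ as above.

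Combining the near and far contributions yields the required $O(|x|^{-(d-\al)-\min(2,\al)})$, and dividing by $I_\al(x)$ produces \eqref{n:30}. The only delicate point is the case $\int |y|^2\vp^p=\infty$, where one must replace the naive second moment bound by the explicit polynomial tail \eqref{n:20} and verify that the exponent $p(d-\al)/(2-p)-(d+\al)$ is strictly positive in the assumed range of $p$; this is where the hypothesis $p>2-\ga/d$ is used sharply.
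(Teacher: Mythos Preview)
Your proof is correct and follows essentially the same strategy as the paper: split off the near region $|y|\le |x|/2$, Taylor expand $|x-y|^{-\gamma}$ there, kill the first-order term by radial symmetry, and control the quadratic remainder via the decay \eqref{n:20}. Your treatment of the divergent second-moment case computes $\int_{|y|\le R}|y|^2\vp^p$ directly, whereas the paper instead bounds $|y|^2\le C|x|^{2-\alpha}|y|^\alpha$ on the near region and shows $\int|y|^\alpha\vp^p<\infty$; these are equivalent manipulations yielding the same $O(|x|^{-\gamma-\alpha})$ error. One difference worth noting: you carry out the far-region estimate $|y|>|x|/2$ explicitly (with the further split $|y-x|\gtrless|x|/4$), while the paper's written proof addresses only the near region. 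In that sense your argument is more complete.
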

{\bf Remark:} Note that the error term $O(|x|^{-\min(2,\alpha)})$ is not necessarily sharp for all   $p\in  (2-\ga/d, 2]$, but it is rather an upper bound, which suffices for our purposes.
  \begin{proof}
 We start with  the relation
  $$
  |x-y|^{-\gamma} = |x|^{-\gamma}\left(1-\gamma \frac{\langle x, y \rangle}{|x|^2} + O(|y|^2/|x|^2) \right).
  $$
  Using $ \int_{|y| \leq |x|/2} y \varphi^p|y)dy =0$,  we get
  \begin{equation}
  \label{po}
  \int_{|y|\leq |x|/2} \frac{\varphi^p(|y|)dy}{|x-y|^\gamma} = |x|^{-\ga} \int_{|y| \leq |x|/2} \vp^p(|y|) dy +
  O\left( \f{1}{|x|^{2+\ga}}\int_{|y|\leq |x|/2} |y|^2 \varphi^p(y)dy \right).
  \end{equation}
  Note that for large $y$,
  $$
   |y|^2 \varphi^p(y)\leq \f{C}{|y|^{\f{p\ga}{2-p}-2}},
  $$
  which is integrable,  provided $\f{p\ga}{2-p}-2>d$ or $p > 2- 2\ga/(d+2+\ga)$. Thus, we have that the error term in \eqref{po} is $O(|x|^{-2-\ga})$.

  On the other hand, if $2-\f{\ga}{d}<p\leq 2- 2\ga/(d+2+\ga)$, we may estimate
  $$
   \f{1}{|x|^{2+\ga}}\int_{|y|\leq |x|/2} |y|^2 \varphi^p(y)dy\leq \f{C}{|x|^{\ga+\al}}\int_{|y|\leq |x|/2} |y|^\al \varphi^p(y)dy.
  $$
  Since for large $y$, we have
  $$
  |y|^\al \varphi^p(y)\leq \f{C}{|y|^{\f{p\ga}{2-p}-\al}},
  $$
  which is integrable when $\f{p\ga}{2-p}-\al>d$ or $p>\f{2(d+\al)}{\ga+\al+d}=2-\f{\ga}{d}$, which is the full range of interest according to Theorem \ref{vs:10}.  Thus, the error term in \eqref{po} is now $O(|x|^{-\ga-\al})$. This finishes the proof of \eqref{n:30} and Lemma \ref{le:vlado}.
  \end{proof}

\noindent Our next lemma provides the self-adjointness of $L_\pm$ as well as a description of the absolutely continuous spectrum.
\begin{lemma}
\label{le:09}
 The linearized operators $L_\pm$  with domains $D(L_\pm)=H^2(\rd)$,  are self-adjoint. In addition, for $p\geq 2$, $\si_{a.c.}(L_+)=\si_{a.c.}(L_-)=[1, \infty)$. For $p<2$ however, $\si_{a.c.}(L_-)=[0, \infty)$, while $\si_{a.c.}(L_+)=[2-p, \infty)$.
\end{lemma}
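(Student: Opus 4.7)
The plan is to write each $L_\pm$ as a free-type self-adjoint operator $-\De+c_\pm$ plus a symmetric perturbation that is relatively bounded and, after subtracting its asymptotic value at infinity, relatively compact with respect to $-\De$. Self-adjointness will then follow from Kato--Rellich, the essential spectrum from Weyl's theorem, and the identification with the absolutely continuous spectrum from a standard limiting absorption principle applied to the short-range remainder.

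For self-adjointness one first verifies boundedness on $L^2$ of all perturbing terms. The multiplicative factor $I_\al[\vp^p]\vp^{p-2}$ is a bounded real-valued function: when $p\geq 2$ this is immediate from the exponential decay of $\vp$ in Lemma \ref{le:2}; when $1<p<2$, the polynomial blow-up $\vp^{p-2}\sim |x|^{d-\al}$ is precisely balanced by the $|x|^{-(d-\al)}$ decay of $I_\al[\vp^p]$, and inserting the normalization constant from \eqref{n:20} into \eqref{n:10} yields the exact limit $I_\al[\vp^p](x)\,\vp(x)^{p-2}\to 1$. The Hartree-type integral operator $f\mapsto \vp^{p-1}I_\al[\vp^{p-1}f]$ entering $L_+$ is bounded on $L^2$ by Hardy--Littlewood--Sobolev combined with H\"older once one checks $\vp^{p-1}\in L^{2d/\al}$, which follows from the decay in Lemma \ref{le:2} and the admissible range of $p$ from Theorem \ref{vs:10}. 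Kato--Rellich (with relative bound zero) applied to the self-adjoint $-\De+1$ on $H^2(\rd)$ then gives $L_\pm$ self-adjoint on $H^2(\rd)$.

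For the essential spectrum, set $V_-^\infty:=\lim_{|x|\to\infty}I_\al[\vp^p](x)\vp(x)^{p-2}$ and write $L_-=-\De+(1-V_-^\infty)+K_-(x)$, where $K_-(x):=V_-^\infty-I_\al[\vp^p]\vp^{p-2}$ vanishes at infinity. The multiplier $K_-$ is $-\De$-compact by the usual compactly supported-cutoff approximation, so Weyl's theorem gives $\si_{\mathrm{ess}}(L_-)=[1-V_-^\infty,\infty)$: for $p\geq 2$ one has $V_-^\infty=0$, yielding $[1,\infty)$; for $1<p<2$ the constants computation above gives $V_-^\infty=1$ and Lemma \ref{le:vlado} refines the decay to $K_-(x)=O(|x|^{-\min(2,\al)})$, yielding $[0,\infty)$. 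For $L_+$ the analogous diagonal contribution has coefficient $(p-1)$, and the extra nonlocal operator $f\mapsto p\,\vp^{p-1}I_\al[\vp^{p-1}f]$ is compact on $L^2$ by a standard approximation-plus-tail argument (local Hilbert--Schmidt-type estimate on a ball; HLS tail controlled by the integrability of $\vp^{p-1}$). Hence $\si_{\mathrm{ess}}(L_+)=[1-(p-1)V_-^\infty,\infty)$, which equals $[1,\infty)$ for $p\geq 2$ and $[2-p,\infty)$ for $1<p<2$.

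The hardest step will be upgrading essential spectrum to absolutely continuous spectrum. When the remainder $K_\pm$ is short-range (always for $p\geq 2$, and for $p<2$ whenever $\min(2,\al)>1$), the classical Agmon--Kato--Kuroda limiting absorption principle rules out embedded eigenvalues and singular continuous spectrum on $(1-V_\pm^\infty,\infty)$. The remaining regime $\al\leq 1$ with $p<2$, where $K_\pm$ decays only like $|x|^{-\al}$ and is long-range, is the main technical obstacle; it is handled by Mourre's commutator method with conjugate operator $A=\tfrac{1}{2\im}(x\cdot\nabla+\nabla\cdot x)$, for which the positive commutator $\im[-\De,A]=-2\De$ combined with a standard Mourre bootstrap absorbs the long-range remainder and yields the absence of singular continuous spectrum. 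In all cases this establishes the claimed $\si_{a.c.}(L_\pm)$.
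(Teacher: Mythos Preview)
Your argument is essentially the paper's, with two cosmetic differences and one genuine addition. For self-adjointness you invoke Kato--Rellich with bounded perturbations, while the paper invokes KLMN; since the perturbing operators are in fact bounded on $L^2$ (your $L^{2d/\al}$ check on $\vp^{p-1}$ is exactly the paper's), either route works and both give $D(L_\pm)=H^2(\rd)$. For the essential spectrum you and the paper do exactly the same computation: identify the limit $I_\al[\vp^p]\vp^{p-2}\to 1$ for $p<2$ from \eqref{n:10} and \eqref{n:20}, subtract it, and apply Weyl's theorem to the decaying remainder. Your explicit remark that the nonlocal piece $f\mapsto \vp^{p-1}I_\al[\vp^{p-1}f]$ is compact is a detail the paper leaves implicit but uses.

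The one substantive difference is that you actually attempt to justify the \emph{absolutely continuous} spectrum claim, via Agmon--Kato--Kuroda in the short-range case and Mourre in the long-range case $\al\leq 1$, $p<2$. The paper's proof stops at $\si_{\mathrm{ess}}(L_\pm)$ and never addresses absence of singular continuous spectrum or embedded eigenvalues; in effect the paper is conflating $\si_{\mathrm{ess}}$ with $\si_{a.c.}$ in the statement. Your extra step is the right thing to do if the lemma is read literally, though the Mourre argument would need the $C^1(A)$-regularity of the nonlocal term to be checked carefully before it can be called complete. For the purposes of the paper (which only ever uses that there is a spectral gap at zero, or at worst that the essential spectrum touches zero), the $\si_{\mathrm{ess}}$ statement is what is actually needed downstream.
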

\begin{proof}
Let us first go through the easy cases $p\geq 2$. The self-adjointness in this case is an easy matter, since  one can apply the KLMN theorem (see Theorem X.17 in \cite{ReSi}) for
the operators
  $$
  L_+ = -\De   +1  -   p V_1 - (p-1)V_2, \ , \ L_- =  -\De  +1  -  V_2,
  $$
  where
  $$
  V_1(f) =  c_{d,\ga} (|\cdot|^{-\ga}*\vp^{p-1} [f])\vp^{p-1}, \ \ V_2(f) =  c_{d,\ga} (|\cdot|^{-\ga}*\vp^{p})\vp^{p-2} f.
  $$
 We need to estimate $\dpr{V_j f}{f}, j=1,2$. We have
    $$
     \left|\langle |\cdot|^{-\ga}*g_1, g_2\rangle  \right| = c  \left|\langle (-\Delta)^{-\alpha/4} g_1, (-\Delta)^{-\alpha/4} g_2\rangle  \right| \leq C \|g_1\|_{L^{2d/(d+\alpha)}} \|g_2\|_{L^{2d/(d+\alpha)}}.
    $$
Applying this to $g_1=g_2=\vp^{p-1} f$ yields
$$
\dpr{V_1 f}{f}\leq C \|f\|^2 \|\vp^{p-1}\|_{L^{\f{2d}{\al}}}^2 = C \|f\|_{L^2}^2 \|\vp\|_{L^{\f{2d(p-1)}{\al}}}^{2(p-1)}.
$$
Since by construction $\vp \in L^q, q\in [2, \infty]$ and\footnote{recall that by assumption for the existence of $\vp$: $p>1+\f{\al}{d}$}   $\f{2d(p-1)}{\al}>2$,   we conclude that $V_1$ satisfies the requirements of KLMN theorem and it is an
admissible perturbation\footnote{This argument actually works for all $p>1+\f{\al}{d}$ and it is not limited to $p\geq 2$. }  of the self-adjoint operator $-\De-\om$.

Regarding $V_2$, we have by Hardy-Litllewood-Sobolev inequality
$$
|\dpr{V_2 f}{f}| \leq C \||\cdot|^{-\ga}*\vp^p\|_{L^\infty} \|\vp\|_{L^\infty}^{p-2} \|f\|_{L^2}^2\leq C \|\vp\|_{L^{\f{pd}{\al}}}^{p} \|f\|_{L^2}^2.
$$
This is also enough by KLMN, since $\f{pd}{\al}\geq \f{2d}{\al}>2$. Thus, the self-adjointness of $L_\pm$ in the case $p\geq 2$ follows by KLMN.
The argument for $V_2$  however is limited to $p\geq 2$, because  otherwise $\vp^{p-2}$ is actually unbounded as $|x|\to \infty$ and the argument above clearly fails.

Assume $p<2$. Let us consider first the self-adjointness of $L_-$. Since $L_-=-\De+1-I_\al[\vp^p]\vp^{p-2}$, clearly this is not a potential that decays at $\infty$. In fact, we will show that
\begin{equation}
\label{600}
I_\al[\vp^p]\vp^{p-2}=1+O(|x|^{-\min(2,\alpha)}).
\end{equation}
Indeed, according to   \eqref{n:20}
$$
\vp^{p-2} (x)=\f{|x|^{d-\al} \Ga(\al/2) \pi^{d/2} 2^\al}{\Ga((d-\al)/2)\int \vp^p}[1+O(|x|^{-\min(2,\alpha)}))].
$$
 Thus, according to \eqref{n:10},
$$
I_\al[\vp^p]\vp^{p-2}(x)= \f{I_\al[\vp^p]}{I_\al(x) \int \vp^p}[1+O(|x|^{-\min(2,\alpha)})]=1+O(|x|^{-\min(2,\alpha)}).
$$
It follows that
\begin{equation}
\label{n:50}
L_-=-\De+1-I_\al[\vp^p]\vp^{p-2}=-\De+G(x),
\end{equation}
where $G$ is a smooth and bounded  function, with $|G(x)|\leq C  (1+|x|^{\min(2,\alpha)})^{-1}$.
It follows that $L_-=L_-^*$. In addition, $\si_{ess}(L_-)=[0, \infty)$.

Regarding $L_+$, we have that for $p<2$,
\begin{equation}
\label{n:60}
L_+=-\De+1 - pV_1 - (p-1) V_2 = -\De+ (2-p) - p V_1 - (p-1) G(x),
\end{equation}
whence $L_+$ is also self-adjoint, with $\si_{ess}(L_+)=[2-p, \infty)$.
\end{proof}
Next, we discuss the point spectrum of the operators $L_\pm$. We have the following result.
\begin{lemma}
  \label{le:76}
The linearized operator $L_+$
  has exactly one negative eigenvalue.  On the other hand, $L_-\geq 0$ with an eigenvalue at zero.  The eigenvalue at zero is simple, with eigenfunction $\vp$.
     \end{lemma}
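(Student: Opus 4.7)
The plan is to combine the Euler--Lagrange identity for $\vp$, the strict positivity of $\vp$, and the variational characterization of $\vp$ provided by Theorem \ref{vs:10}.

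I would begin with the operator $L_-$. The profile equation \eqref{t:20} can be rewritten as $L_-\vp = 0$, so zero is an eigenvalue with eigenfunction $\vp$. To show $L_-\geq 0$ and that this eigenvalue is simple, I would invoke the Perron--Frobenius principle for Schr\"odinger-type operators: $L_-$ is self-adjoint by Lemma \ref{le:09}, and $\vp>0$ is a strictly positive $L^2$-eigenfunction (for $p\geq 2$ this is immediate from the exponential decay, while for $p<2$ the decay \eqref{n:20} together with the admissible range $p>1+\al/d$ ensures $\vp\in L^2$). The existence of a strictly positive $L^2$-eigenfunction forces $0=\inf\sigma(L_-)$, with the corresponding eigenspace one-dimensional. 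In the easier regime $p\geq 2$ one could alternatively mimic the rearrangement argument of Lemma \ref{le:50}, since then the potential $I_\al[\vp^p]\vp^{p-2}$ is itself bell-shaped.

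Turning to $L_+$, I would first exhibit at least one negative eigenvalue by a direct computation with $\vp$ as a test function. Substituting $(-\De+1)\vp=I_\al[\vp^p]\vp^{p-1}$ from \eqref{t:20} into the definition of $L_+$ gives $L_+\vp=-(2p-2)\,I_\al[\vp^p]\vp^{p-1}$, whence
\[
\dpr{L_+\vp}{\vp}=-(2p-2)\dpr{I_\al[\vp^p]}{\vp^p}<0.
\]
The min--max principle then produces at least one eigenvalue strictly below the bottom of the essential spectrum identified in Lemma \ref{le:09}. For the uniqueness of this negative eigenvalue I would exploit the variational description $\vp=t_0\Phi$ from Theorem \ref{vs:10}: $\vp$ realizes the minimum of $\int_{\rd}[|\nabla u|^2+u^2]\,dx$ subject to $\dpr{I_\al[|u|^p]}{|u|^p}=\textup{const}$, the role of the scaling $t_0$ being precisely to force the Lagrange multiplier to equal $1$. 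Expanding the constrained cost to second order at $\vp$ and using the Euler--Lagrange equation shows that the standard necessary second-order condition is exactly $\dpr{L_+ h}{h}\geq 0$ for every $h$ tangent to the constraint, i.e.\ on the codimension-one subspace $\{h:\dpr{(-\De+1)\vp}{h}=0\}$. Since $\vp$ is transversal to this subspace ($\dpr{(-\De+1)\vp}{\vp}>0$) and $\dpr{L_+\vp}{\vp}<0$, a standard min--max counting argument forces $L_+$ to have exactly one negative eigenvalue.

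The main obstacle, in my view, lies in the regime $p<2$: the potential of $L_-$ then ceases to be bell-shaped (since $\vp^{p-2}$ becomes increasing in $|x|$), so the convenient rearrangement argument of Lemma \ref{le:50} is unavailable and one must genuinely rely on the Perron--Frobenius approach, using self-adjointness from Lemma \ref{le:09} together with the sharp decay \eqref{n:20} to certify that $\vp$ is a bona fide $L^2$ ground state. A secondary but routine technical point is matching the normalization $\vp=t_0\Phi$ from Theorem \ref{vs:10} with the Lagrange multiplier equal to $1$, but this is built into the statement of that theorem via the equation $\int[|\nabla\vp|^2+|\vp|^2]\,dx=\dpr{I_\al[\vp^p]}{\vp^p}$.
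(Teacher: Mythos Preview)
Your proposal is correct and follows essentially the same route as the paper. For $L_-$ the paper uses the contrapositive form of your Perron--Frobenius argument (a hypothetical negative bottom eigenvalue would have a positive eigenfunction orthogonal to $\vp>0$), and for $L_+$ the paper expands the quotient functional \eqref{200} directly rather than passing to the equivalent constrained problem, but both analyses yield $\dpr{L_+ h}{h}\geq 0$ on the very same codimension-one subspace $\{I_\al[\vp^p]\vp^{p-1}\}^\perp=\{(-\De+1)\vp\}^\perp$.
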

  \begin{proof}
Let us first establish the claims regarding $L_-$.   Clearly $L_-[\vp]=0$, as this is simply \eqref{t:20}. Thus zero is an eigenvalue for $L_-$, with eigenfunction $\vp$.  Assuming that $L_-$ has a negative eigenvalue will lead to a contradiction. Indeed, pick the bottom of the spectrum for $L_-$. By the results in Lemma \ref{le:09} (and the description of the structure of $L_-$),  it will necessarily have a positive eigenfunction, say $\psi_0$. But then, $\psi_0\perp \vp$ as eigenfunctions corresponding to different eigenvalues, a contradiction. Thus, zero is the bottom of the spectrum.  The simplicity of the bottom of the spectrum (in this case the zero eigenvalue)  is also well-known  by the  Sturm oscillation argument.

  We now turn our attention to $L_+$.  First, it is easy to see that
  \begin{eqnarray*}
  L_+ \vp &=& -\De \vp +\vp   - pI_\al[\vp^{p}]\vp^{p-1} -(p-1) I_\al[\vp^{p}] \vp^{p-1} = \\
  &=&  -(2p-2) I_\al[\vp^{p}]  \vp^{p-1}.
  \end{eqnarray*}
  Thus,
  $$
  \dpr{L_+\phi}{\phi}=-(2p-2) \dpr{I_\al[\vp^{p}]}{\vp^p}<0.
  $$
  This shows that $L_+$ has at least one negative eigenvalue.
   It remains to show that $L_+$ is positive on a codimension one subspace.

 To that end,  consider the minimizer $\Phi$ of the optimization problem \eqref{200}. Consider a perturbation of $\Phi$ in the form $u_\epsilon=\Phi+\epsilon h$, for a real-valued  function $h$  and $\epsilon: |\epsilon|<<1$. Expanding in Taylor series up to order $\epsilon^2$, we obtain
  \begin{eqnarray*}
& &   I[u] :=  \dpr{\nabla u}{\nabla u}+ \dpr{u}{u}= I[\Phi]+2\epsilon \dpr{-\De \Phi+\Phi}{h}+\epsilon^2 \dpr{-\De h+h}{h};\\
  & & M[u] = \dpr{I_\al[|u|^p]}{|u|^p}=M[\Phi]+\\
  &+& \epsilon[2p \dpr{I_\al[\Phi^p]\Phi^{p-1}}{ h}]+\epsilon^2[p^2\dpr{I_\al[\Phi^{p-1}h]}{\Phi^{p-1}h}+p(p-1)\dpr{I_\al[\Phi^p] \Phi^{p-2} h}{h}] +o(\epsilon)
  \end{eqnarray*}
Denote $I_0:=I[\Phi], M_0:=M[\Phi]$. Recall that since $\Phi$ is a minimizer for \eqref{200}, we will have
\begin{equation}
\label{300}
g(\epsilon):=\f{I[u_\epsilon]}{(M[u_\epsilon])^{1/p}}\geq \frac{I_0}{(M_0)^{1/p}}=g[0].
\end{equation}
 Clearly, such a relation implies that zero is a minimum for the function $\epsilon\to g(\epsilon)$. In particular, $g'(0)=0$. This is exactly the Euler-Lagrange equation for \eqref{200}, which means that $\Phi$ satisfies the PDE
 \begin{equation}
 \label{310}
 -\De \Phi+\Phi - \f{I_0}{M_0} I_\al[\Phi^p]\Phi^{p-1}=0.
 \end{equation}
Clearly, a function in the form $\vp=t_0\Phi$ will satisfy \eqref{t:20}, once we subject it
 to the normalization $\int_{\rd} [|\nabla \vp(x)|^2+|\vp(x)|^2] dx=\dpr{I_\al[\vp^p]}{\vp^p}$. That is
 \begin{equation}
 \label{320}
 \vp=\left(\f{I_0}{M_0}\right)^{\f{1}{2p-2}}\Phi.
  \end{equation}
We take advantage of the variational structure to show  that the operator $L_+$ has exactly one negative eigenvalue. Indeed, since $g$ has absolute minimum at $\epsilon=0$ and $g'(0)=0$, it is necessary that $g''(0)\geq 0$. In order to simplify the computations, let us take a function $h$, which satisfies the orthogonality condition $\dpr{I_\al[\Phi^p]\Phi^{p-1}}{ h}=0$. That is $h\perp I_\al[\Phi^p]\Phi^{p-1}$. By \eqref{310}, it follows that $\dpr{-\De \Phi+\Phi}{h}=0$ as well. It is now easy to see that the following expansion in powers of $\epsilon$ holds
\begin{eqnarray*}
& & g(\epsilon) =  g(0)+ \\
&+& \f{\epsilon^2}{M_0^{1/p}}\left[\dpr{-\De h+h}{h} - \f{I_0}{M_0}[p\dpr{I_\al[\Phi^{p-1}h]}{\Phi^{p-1}h}+ (p-1)\dpr{I_\al[\Phi^p] \Phi^{p-2} h}{h}] \right]+ o(\epsilon^2).
\end{eqnarray*}
We see that  $g''(0)\geq 0$ is equivalent to the positivity of the operator\footnote{where we have used the relation \eqref{320}}
  $$
 -\De+1  - \f{I_0}{M_0}[p I_\al[\Phi^{p-1}\cdot]\Phi^{p-1} + (p-1) I_\al[\Phi^p] \Phi^{p-2}]=-\De+1  - p I_\al[\vp^{p-1}\cdot]\vp^{p-1} - (p-1) I_\al[\vp^p] \vp^{p-2}
  $$
  on the subspace $\{I_\al[\Phi^p]\Phi^{p-1}\}^\perp$. We conclude that
  $$
  L_+|_{\{I_\al[\vp^p]\vp^{p-1}\}^\perp}\geq 0,
  $$
  as claimed.

    \end{proof}

  \subsection{The basics of the  instabilities  index counting}
  \label{sec:4.2}
  Now that we have established Lemma \ref{le:76}, we are ready to discuss the spectral stability of the waves $e^{-i  t} \vp$. In fact, the eigenvalue problem \eqref{a:20} falls within the scope\footnote{In the standard formulation, the GSS theory requires that there is a spectral gap between the zero and the continuous spectrum of $L$. In our case, this is clearly violated in the case $p<2$, since $\si_{a.c.}[L_-]=[0, \infty)$. By a remark in the argument in the original paper, this situation is also covered, in other words if the continuous spectrum just touches the zero, the statement still goes through as in the case with a spectral gap. For further justification in this case of touching, one should consult \cite{LLe} as well.}  of the Grillakis-Shatah-Strauss (GSS) theory, \cite{GSS}, see also \cite{KKS, KKS2}.  Recall that we have established that $L$ has exactly
  one negative eigenvalue\footnote{We henceforth adopt the notation $n(S)$ for a number of strictly negative eigenvalues of a self-adjoint operator/matrix $S$}, $n(L)=1$. In principle, in order to apply the theory, one needs to identify the kernel of the operator $L$. We have already know quite a bit about it - $\vp \in Ker[L_-]$, while a differentiation of the Euler-Lagrange equation \eqref{20},  in each of the variables $x_1, \ldots, x_d$ shows that $\f{\p \vp}{\p x_j}, j=1, \ldots, d$ is in the kernel of
  $\cl=L_+$, i.e. $span \{ \f{\p \vp}{\p x_j}, j=1, \ldots, d\}\subset Ker[\cl]$.  An important problem in the theory has been to determine whether these are indeed all of the linearly independent elements of $Ker[\cl]$, that is - is it true that
  \begin{equation}
  \label{a:80}
  Ker[\cl]=span \{ \f{\p \vp}{\p x_j}, j=1, \ldots, d\}?
  \end{equation}
  Ground states with the property \eqref{a:80} has been referred to as {\it non-degenerate}, \cite{FL}, \cite{MVS1, MVS2, MVS3}. Our argument goes forward even without knowledge of the non-degeneracy\footnote{although such a statement is very likely to hold} of $\vp$. By the GSS theory, we have that if $Ker[L]=span\{y_j, j=1, \ldots, l\}$ and $\cj$ is invertible with \\  $\cj^{-1}: Ker[L]\to [Ker[L]]^\perp$,  then
  \begin{equation}
  \label{a:100}
  \# \{\la: \Re\la>0: \cj L f=\la f\}=n(L)-n(D)=1-n(D),
  \end{equation}
  where the matrix $D\in M_{l,l}$ has entries
    \begin{equation}
   \label{a:105}
  D_{ij}=\dpr{L\psi_j }{\psi_i}: L\psi_j = \cj^{-1} y_j,
  \end{equation}
  where the equation $L\psi_j =\cj^{-1} y_j$ has a  solution\footnote{which is not unique, unless $Ker[L]=\{0\}$} $\psi_j$, since $\cj^{-1}: Ker[L]\to [Ker[L]]^\perp$.
   \subsection{Classification of the stability  for the Hartree solitary waves - proof of Theorem \ref{theo:60}}
   \label{sec:4.5}
   We start our considerations with a calculation, that will be useful in the sequel.
   \begin{lemma}
   \label{le:n10}
   $\vp\perp Ker[L_+]$ and moreover,
   $$
   \dpr{L_+^{-1} \vp}{\vp}=-\f{\Ga}{4(p-1)} \|\vp\|_{L^2}^2.
   $$
   \end{lemma}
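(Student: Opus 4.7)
The plan is to construct a preimage $L_+^{-1}\vp$ explicitly by differentiating the $\om$-parameter family of MVS solutions and then use the scaling identity to read off its inner product with $\vp$. Concretely, consider the family
$$
-\De \vp_\om + \om \vp_\om - I_\al[\vp_\om^p]\vp_\om^{p-1}=0,\qquad \om>0,
$$
normalized so that $\vp_1=\vp$. A direct rescaling check using the standard Choquard scaling shows that
$$
\vp_\om(x)=\om^{\f{2+\al}{4(p-1)}}\vp(\sqrt{\om}\, x),
$$
so the whole family is obtained by dilating $\vp$.

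Next, I would set $\Phi:=\p_\om \vp_\om\big|_{\om=1}$ and differentiate the $\om$-equation at $\om=1$. A short computation (using the definition of $L_+$) gives
$$
L_+\Phi=-\vp.
$$
This already yields $\vp\in \mathrm{Range}(L_+)$, and since $L_+$ is self-adjoint (Lemma~\ref{le:09}) with zero isolated from the essential spectrum (for all admissible $p$, by Lemma~\ref{le:09}), the range is closed and orthogonal to $\mathrm{Ker}(L_+)$. In particular $\vp\perp \mathrm{Ker}(L_+)$, and $L_+^{-1}\vp$ is well-defined modulo the kernel; up to that ambiguity we have $L_+^{-1}\vp=-\Phi$. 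Consequently,
$$
\dpr{L_+^{-1}\vp}{\vp}=-\dpr{\Phi}{\vp}=-\f{1}{2}\f{d}{d\om}\|\vp_\om\|_{L^2}^2\bigg|_{\om=1}.
$$

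Finally, from the explicit scaling above, a change of variables $y=\sqrt{\om}\,x$ gives
$$
\|\vp_\om\|_{L^2}^2 \;=\;\om^{\f{2+\al}{2(p-1)}-\f{d}{2}}\|\vp\|_{L^2}^2 \;=\;\om^{\f{\Ga}{2(p-1)}}\|\vp\|_{L^2}^2,
$$
since $2+\al-(p-1)d=\Ga$. Differentiating at $\om=1$ and plugging into the previous display yields the desired identity
$$
\dpr{L_+^{-1}\vp}{\vp}=-\f{\Ga}{4(p-1)}\|\vp\|_{L^2}^2.
$$
The only nontrivial point is the closed-range/self-adjointness step that legitimizes ``inverting $L_+$ on $\mathrm{Range}(L_+)$'', but this is immediate from Lemmas~\ref{le:09} and \ref{le:76}; the rest is the scaling bookkeeping.
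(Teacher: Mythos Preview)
Your proof is correct and follows essentially the same scaling argument as the paper: both differentiate the one-parameter family of rescaled MVS solutions to produce an explicit preimage of $\vp$ under $L_+$, then read off $\langle L_+^{-1}\vp,\vp\rangle$ from the derivative of $\|\vp_\om\|_{L^2}^2$. The only cosmetic difference is the choice of parameter---the paper uses the spatial dilation $\la$ (so the mass term becomes $\la^{-2}$ and $L_+[\partial_\la\vp_\la|_{\la=1}]=2\vp$), while you use the mass parameter $\om$ directly (yielding $L_+\Phi=-\vp$); these are related by $\om=\la^{-2}$ and lead to the same computation.
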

   \begin{proof}
  We take advantage of the scaling of the PDE \eqref{t:20}. More precisely, introduce $\vp_\la$, so that
  \begin{equation}
  \label{390}
  \vp=\la^b \vp_\la(\la x),
  \end{equation}
  where $\la>0$ and  $b$ is a parameter to be determined from the scaling.   Plugging this into \eqref{t:20}, we find
  $$
  - \la^{b+2} \De \vp_\la+\la^b \vp_\la - \la^{b(2p-1)-(d-\ga)}   I_\al[\vp_\la^p] \vp_\la^{p-1} =0.
  $$
 Equating the powers $b+2=b(2p-1)-(d-\ga)$ yields $b=\f{2+d-\ga}{2(p-1)}$, which we use henceforth. Dividing by $\la^{b+2}$ yields the relation
 \begin{equation}
 \label{400}
 -   \De \vp_\la+\la^{-2} \vp_\la -  I_\al[\vp_\la^p] \vp_\la^{p-1} =0
 \end{equation}
  Taking a derivative in $\la$ in \eqref{400} yields
 $$
 -   \De[\f{\p \vp_\la}{\p \la}]+\la^{-2} [\f{\p \vp_\la}{\p \la}] -2\la^{-3} \vp_\la -  p I_\al[\vp_\la^{p-1} \f{\p \vp_\la}{\p \la}] \vp_\la^{p-1} - (p-1)  I_\al[\vp_\la^p] \vp_\la^{p-2} [\f{\p \vp_\la}{\p \la}]=0
$$
  Evaluating the previous expression at $\la=1$ can be interpreted as follows
  $$
  L_+[\f{\p \vp_\la}{\p \la}|_{\la=1}]=2\vp.
  $$
  this shows in particular that $\vp\perp Ker[L_+]$. In addition,
  $$
  \dpr{L_+^{-1} \vp}{\vp}= \f{1}{2} \dpr{\f{\p \vp_\la}{\p \la}|_{\la=1}}{\vp}=\f{1}{4}  [\p_\la  \|\vp_\la\|^2 ]|_{\la=1}=\f{d-2b}{4} \|\vp\|_{L^2}^2=-\f{\Ga}{4(p-1)} \|\vp\|_{L^2}^2.
  $$
   \end{proof}
  We are now in a position to consider the eigenvalue problem for the Hartree problem \eqref{a:20}. With the assignment, $\vec{v}\to e^{\la t} \vec{v}$, we are led to consider
   \begin{equation}
   \label{a:201}
   \cj L \vec{v} = \la \vec{v}.
   \end{equation}
   We know that $Ker[L_-]=span[\vp]$, while\footnote{As it was explained above in Section \ref{sec:4.2}, $Ker[L_+]$ contains at least the vectors $\p_j \vp, j=1, \ldots, d$}  $Ker[L_+]=span[y_1, \ldots, y_l]$. By Lemma \ref{le:n10}, $Ker[L_-]\perp Ker[L_+]$, whence
   \begin{equation}
   \label{opl}
   \cj^{-1} (Ker[L])= \cj^{-1} \left[ \begin{array}{c} Ker[L_+] \\ Ker[L_-] \end{array} \right] = \left[ \begin{array}{c} Ker[L_-] \\ Ker[L_+] \end{array} \right]\perp
   \left[ \begin{array}{c} Ker[L_+] \\ Ker[L_-] \end{array} \right]=Ker[L],
   \end{equation}
   whence $\cj^{-1}(Ker[L])\subset (Ker[L])^\perp$ as required. In addition, $L_-^{-1}$ is positive definite matrix on $span[y_1, \ldots, y_l]\subset (Ker[L_-])^\perp$. Thus, the matrix $D$, introduced in \eqref{a:105} has at most one  negative eigenvalue. Moreover, there is a negative eigenvalue if and only if
   $$
   D_{11}=\dpr{L^{-1} \cj^{-1}\left(\begin{array}{c} 0 \\ \vp \end{array} \right)}{\cj^{-1}\left(\begin{array}{c} 0 \\ \vp \end{array} \right)}=\dpr{L_+^{-1} \vp}{\vp}<0
   $$
 This, together with \eqref{a:100} allows us to derive a Vakhitov-Kolokolov type criteria for the Hartree waves, namely that stability of $e^{- i t} \vp$ is equivalent to
 $\dpr{L_+^{-1} \vp}{\vp}<0$. Using the formula for $\dpr{L_+^{-1} \vp}{\vp}$ in Lemma \ref{le:n10}, we conclude that the stability occurs exactly when $\Ga>0$. Moreover, if $\Ga<0$, there is a pair (one  positive and one negative) of eigenvalues $\pm\la$ in \eqref{a:201}. By the continuity of the spectrum on $\Ga$,  we have that  for $\Ga=0$, the pair $\pm 0$ transitions through the zero to become a pair of purely imaginary eigenvalues, so the eigenvalue problem has an extra pair of generalized eigenvalues at zero, when $\Ga=0$.

   \subsection{Classification of the stability for the waves  $e^{i \om t} \Psi_\om$ for the
   Klein-Gordon-Hartree model: Proof of Theorem \ref{theo:50}}

Much of what was established in Section \ref{sec:4.11} will be useful for the Klein-Gordon case as well.
Indeed for the eigenvalue problem \eqref{b:20}, we have
\begin{eqnarray*}
\left(\begin{array}{cc} 0 & {\mathbf I_2} \\ - {\mathbf I_2} & -\cj_\om  \end{array}\right)^{-1}
Ker\left(\begin{array}{cc} L & 0 \\  0 &  {\mathbf I_2} \end{array}\right)= \left(\begin{array}{c} \cj_\om[Ker[L]] \\ 0 \end{array}\right) \perp \left(\begin{array}{c} \ Ker[L] \\ 0 \end{array}\right)
\end{eqnarray*}
as established in \eqref{opl}. This is one of the requirements of GSS and it has now been verified. Similar to the arguments in Section \ref{sec:4.5}, the portion of the matrix $D$ generated by $Ker[L_+]$ is trivially positive definite. Indeed, consider the elements of $Ker[L]$ in the form
$
s_j:=\left(\begin{array}{c} y_j \\ 0\\ 0\\ 0  \end{array} \right), j=1, \ldots, l.
$
 Then, an easy computation shows that
 \begin{eqnarray*}
 D_{i,j}&=& \dpr{\left(\begin{array}{cc} L & 0 \\  0 &  {\mathbf I_2} \end{array}\right)^{-1}\left(\begin{array}{cc} 0 & {\mathbf I_2}
 \\ - {\mathbf I_2} & -\cj_\om  \end{array}\right)^{-1}  s_j}{\left(\begin{array}{cc} 0 & {\mathbf I_2} \\ - {\mathbf I_2} & -\cj_\om  \end{array}\right)^{-1}  s_i}=\\
 &=& \dpr{\left(\begin{array}{cc} L & 0 \\  0 &  {\mathbf I_2} \end{array}\right)^{-1}\left(\begin{array}{cc} -\cj_\om & -{\mathbf I_2}
 \\ {\mathbf I_2} & 0 \end{array}\right)  s_j}{\left(\begin{array}{cc} -\cj_\om & -{\mathbf I_2} \\ {\mathbf I_2} & 0  \end{array}\right)  s_i}=\\
 &=&  \dpr{\left(\begin{array}{cc} L^{-1}  & 0 \\  0 &  {\mathbf I_2} \end{array}\right) \left(\begin{array}{c}  \f{2\om}{\sqrt{1-\om^2}}\left(\begin{array}{c} 0 \\ -y_j\end{array}\right) \\  \left(\begin{array}{c} y_j \\ 0 \end{array}\right) \end{array}\right)} {\left(\begin{array}{c}  \f{2\om}{\sqrt{1-\om^2}} \left(\begin{array}{c} 0 \\ -y_j\end{array}\right) \\  \left(\begin{array}{c} y_j \\ 0 \end{array}\right) \end{array}\right)}\\
 &=& \f{4\om^2}{1-\om^2} \dpr{L_-^{-1} y_i}{y_j}+\|y_j\|^2.
 \end{eqnarray*}
The claim about the positivity of the portion of the matrix corresponding to these eigenvectors follows from  the positivity of $L_-^{-1}$ on $span[y_1, \ldots y_d]\subset Ker[L_-]^\perp$, which was previously established. Once again, the stability is found to be equivalent to the following criteria
   $$
   D_{11}=\dpr{\left(\begin{array}{cc} L & 0 \\  0 &  {\mathbf I_2} \end{array}\right)^{-1}\left(\begin{array}{cc} 0 & {\mathbf I_2}
 \\ - {\mathbf I_2} & -\cj_\om  \end{array}\right)^{-1}  s_0}{\left(\begin{array}{cc} 0 & {\mathbf I_2} \\ - {\mathbf I_2} & -\cj_\om  \end{array}\right)^{-1}  s_0}<0,
   $$
   where $s_0:=\left(\begin{array}{c}  0 \\ \vp\\ 0\\ 0  \end{array} \right)$. So, it remains to compute $D_{11}$. We have
   \begin{eqnarray*}
   D_{11} &=&     \dpr{\left(\begin{array}{cc} L^{-1}  & 0 \\  0 &  {\mathbf I_2} \end{array}\right) \left(\begin{array}{c}  \f{2\om}{\sqrt{1-\om^2}}\left(\begin{array}{c} \vp \\ 0 \end{array}\right) \\  \left(\begin{array}{c} 0 \\ \vp \end{array}\right) \end{array}\right)} {\left(\begin{array}{c}  \f{2\om}{\sqrt{1-\om^2}} \left(\begin{array}{c} \vp  \\ 0 \end{array}\right) \\  \left(\begin{array}{c} 0 \\ \vp \end{array}\right) \end{array}\right)}=\\
   &=&  \f{4\om^2}{1-\om^2} \dpr{L_+^{-1} \vp}{\vp}+\|\vp\|^2=\|\vp\|^2\left(1- \f{\Ga \om^2}{(p-1)(1-\om^2)}\right),
   \end{eqnarray*}
   where in the last equality, we have used the formula for $\dpr{L_+^{-1} \vp}{\vp}$ established in Lemma \ref{le:n10}.
Since $\om \in (-1,1)$, we have instability ($D_{11}>0$), if $\Ga<0$. If $\Ga>0$, we solve the inequality $1- \f{\Ga \om^2}{(p-1)(1-\om^2)}<0$ to obtain the necessary and sufficient condition for stability
$$
1>|\om|>\sqrt{\f{p-1}{p-1+\Ga}}=\sqrt{\f{p-1}{2+\al-(p-1)(d-1)}}.
$$

\subsection{On the stability of the ``normalized'' waves for the fractional problem: Proof of Theorem \ref{a:110}}
\label{sec:4.6}
In order to establish the stability of the waves, we consider the eigenvalue problem \eqref{a:20},
with the assignment $\vec{v}\to e^{\la t} \vec{v}$. It now reads
\begin{equation}
\label{vl:50}
\cj L \vec{v}= \la \vec{v}.
\end{equation}
It was already established that $L$ is self-adjoint, at least for $p>2$ and in addition, according to Lemma \ref{le:50}, $n(L_+)=1$,
while $L_-\geq 0$, with a simple eigenvalue at zero spanned by $\phi$. Thus, we apply the instabilities index count. In fact formulas \eqref{a:100}, \eqref{a:105} apply, in addition to \eqref{opl}, which shows that $\cj(Ker[L])\subset Ker[L]^\perp$. Note that we still do not know that $\phi$ is non-degenerate, that is whether one has more than
$\p_1 \phi, \ldots, \p_d \phi$ in $Ker[L]$ or not. Similarly to the classical case, we may sidestep this issue by the positivity of $L_-^{-1}$ on
$(Ker[L_-])^\perp$. Hence, stability is reduced to the sign of the quantity $D_{11}$. More specifically,   since $\phi\in Ker[L_-]$,
we may label $y_1:=\left(\begin{array}{c} 0 \\ \phi \end{array}\right)\in Ker[L]$. Then, {\it if it is indeed the case that $\phi\in (Ker[\cl])^\perp$}, we can compute
$$
 \dpr{D y_1}{y_1}= D_{11}=\dpr{\cl^{-1}[\phi]}{\phi}.
  $$
  If we establish $\dpr{\cl^{-1}[\phi]}{\phi}<0$, this would be enough to claim that $D$ has a negative eigenvalue and we are done, since
  \eqref{a:100} predicts $1-1=0$ eigenvalues of \eqref{vl:50} with positive real part.

  For the computation of $\dpr{\cl^{-1}[\phi]}{\phi}$, we can apply scaling argument similar to the one presented in Lemma \ref{le:n10}.
Instead, we take derivative in $\la$ in the Euler-Lagrange equation \eqref{20}. We obtain
   $$
   \cl\left[\f{\p \phi}{\p \la}\right]=\f{\p \om}{\p \la} \phi.
   $$
   Thus $\phi\in (Ker[\cl])^\perp$ and
   $$
   \dpr{\cl^{-1}\phi}{\phi}=\f{1}{2 \f{\p \om}{\p \la}} \p_\la \|\phi^\la\|_{L^2}^2.
   $$
   By \eqref{55}  we have
   $
   \|\phi^\la\|_{L^2}^2=\la\|\phi^1\|_{L^2}^2,
   $
   whence $\p_\la \|\phi^\la\|_{L^2}^2=\|\phi^1\|_{L^2}^2>0$.  According  to  \eqref{67},
   $$
   \f{\p \om}{\p \la}=(1+\f{2\be(p-1)}{\Ga}) \f{2\be(p-1)}{\Ga} I_1 \la^{\f{2\be(p-1)}{\Ga}-1}<0,
   $$
   since $I_1<0$, $\Ga>0$.      We have thus proved Theorem \ref{a:110}. Note that the restriction $p>2$ appeared only to satisfy technical (but important) requirements for self-adjointness of $L$ and it is not necessary in the index computations. Thus, we expect this to be a removable, technical assumption, once we have more information about the waves $\phi$ similar to the ones in the classical case, e.g. Theorem \ref{vs:10}.


\end{document}